\title[Global Regular Solutions in $\Phi\left(2\right)$]{Global Regular Solutions
for the Navier-Stokes system with small initial data in $\Phi\left(2\right)$: an elementary approach.}
\author{Jean Cortissoz}
\address{Departamento de Matem\'aticas,
Universidad de Los Andes\\
Bogot\'a DC, COLOMBIA}
\email{jcortiss@uniandes.edu.co}
\subjclass{35Q30}
\keywords{Navier-Stokes equations, Regularity}
\newtheorem{theorem}{Theorem}
\newtheorem{lemma}{Lemma}
\newtheorem{corollary}{Corollary}
\begin{document}
\begin{abstract}
In this note we show that solutions of the Navier Stokes equation that 
are small in $L^{\infty}\left(\left(0,T\right), \Phi\left(2\right)\right)$
are also globally smooth. This result
relates to a recent result in \cite{Arnold} (announced in \cite{Kaloshin}).
Also, we give elementary proofs of some classical
results of Giga \cite{Y. Giga}, v. Wahl \cite{Wahl}, and Kozono-Sohr \cite{Sohr}. 
\end{abstract}
\maketitle

\section{Introduction}

One of the outstanding problems in mathematics
is the existence of global regular solutions to the Navier-Stokes system
\begin{equation}
\label{Navierstokes}
\left\{
\begin{array}{l}
u_t-\Delta u+u\cdot\nabla u+\nabla p=0 \quad \mbox{in} \quad \mathbb{T}^3\times \left(0,\infty\right)\\
u\left(x,0\right)=\psi, \quad div \,u=0,
\end{array}
\right.
\end{equation}
where $\mathbb{T}^3=\left[0,1\right]^3$, with periodic boundary conditions.

Several short time existence results and small-initial data-global existence results 
have been shown for different Banach spaces and the literature on the subject is extensive.
This note is another contribution to the subject.

Before we state our main results we must give a few definitions.
Given a function $u\left(x,t\right)\in L^2\left(\mathbb{T}^3\right)$ we write its Fourier expansion as 
\[
\sum_{\mathbf{k}}u_{\mathbf{k}}\left(t\right)\exp\left(2\pi i \left<x,\mathbf{k}\right>\right),
\quad \mathbf{k}=\left(k_1,k_2,k_3\right)\in\mathbb{Z}^3,
\]
and we define the spaces $\Phi\left(\alpha\right)\subset \mathcal{P}'$ ($\mathcal{P}'$ is
the dual of the space of $C^{\infty}$ periodic functions on $\mathbb{T}^3$) as follows,
\[
\Phi\left(\alpha\right)=
\left\{f\,:\, \left|f_{\mathbf{k}}\right|\leq\frac{c}{\left|\mathbf{k}\right|^{\alpha}}
,\,\mathbf{k}\neq 0, \quad f_0=0\right\}
\]
endowed with the norm
\[
\left\|f\right\|_{\alpha}= 
\sup_{\mathbf{k}\in \mathbb{Z}^3\setminus \left\{0\right\}} \left|\mathbf{k}\right|^{\alpha}\left|f_{\mathbf{k}}\right|
\]
which makes them Banach spaces. 
Notice that if $\alpha >\frac{3}{2}$, $\Phi\left(\alpha\right)\subset L^2\left(\mathbb{T}^3\right)$.
These spaces are considered in \cite{Arnold}, where 
the following Theorem with $\alpha=2+\epsilon$, $\epsilon>0$, is proved
\begin{theorem}
Let $0<3\epsilon<1$ and $\left\|\psi\right\|_{\alpha}\leq \delta$ where 
$\psi=\frac{c_0\left(\mathbf{k}\right)}{\left|\mathbf{k}\right|^{\alpha}}$ is the initial condition
and $\delta=\delta\left(\alpha\right)$ is sufficiently small. Then equation
(\ref{Navierstokes}) has a global solution
 $v\left(t,\mathbf{k}\right)=\frac{c\left(t,\mathbf{k}\right)}{\left|\mathbf{k}\right|^{\alpha}}$ 
such that $c\left(t,\mathbf{k}\right)$ is a continuous mapping from $\left[0,\infty\right)$ to
$L^{\infty}\left(\mathbb{Z}^3\setminus\left\{0\right\}\right)$.
\end{theorem}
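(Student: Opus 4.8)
The plan is to pass to Fourier variables, eliminate the pressure by the Leray projection, and recast (\ref{Navierstokes}) as a fixed-point problem for the mild (Duhamel) formulation in the space $X=C\left(\left[0,\infty\right),\Phi\left(\alpha\right)\right)$. Writing the divergence-free condition as $\left<u_{\mathbf{k}},\mathbf{k}\right>=0$ and projecting the nonlinearity onto this constraint, each Fourier mode satisfies
\[
u_{\mathbf{k}}\left(t\right)=e^{-4\pi^{2}\left|\mathbf{k}\right|^{2}t}\psi_{\mathbf{k}}-\int_{0}^{t}e^{-4\pi^{2}\left|\mathbf{k}\right|^{2}\left(t-s\right)}P_{\mathbf{k}}\left(\sum_{\mathbf{j}+\mathbf{l}=\mathbf{k}}2\pi i\left<u_{\mathbf{j}},\mathbf{l}\right>u_{\mathbf{l}}\right)ds,
\]
where $P_{\mathbf{k}}$ denotes projection onto the plane orthogonal to $\mathbf{k}$. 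I would denote the right-hand side by $\Psi\left(u\right)$ and seek a fixed point in a small ball $\left\{u\,:\,\sup_{t}\left\|u\left(t\right)\right\|_{\alpha}\leq R\right\}$.

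Two features drive the estimates. First, since $f_{0}=0$ forces $\left|\mathbf{k}\right|\geq 1$, the heat factor obeys $e^{-4\pi^{2}\left|\mathbf{k}\right|^{2}t}\leq e^{-4\pi^{2}t}$, so the linear term decays exponentially and contributes at most $e^{-4\pi^{2}t}\left\|\psi\right\|_{\alpha}\leq\delta$ to the $\Phi\left(\alpha\right)$ norm; this is what makes a single global iteration feasible on the torus. Second, integrating the heat factor in time gains two derivatives: $\int_{0}^{t}e^{-4\pi^{2}\left|\mathbf{k}\right|^{2}\left(t-s\right)}ds\leq\frac{1}{4\pi^{2}\left|\mathbf{k}\right|^{2}}$.

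The heart of the argument is the bilinear (convolution) estimate. Using $\left|u_{\mathbf{j}}\right|\leq\left\|u\right\|_{\alpha}\left|\mathbf{j}\right|^{-\alpha}$ the nonlinearity is controlled by $\left\|u\right\|_{\alpha}^{2}$ times
\[
\sum_{\mathbf{j}+\mathbf{l}=\mathbf{k}}\frac{\left|\mathbf{l}\right|}{\left|\mathbf{j}\right|^{\alpha}\left|\mathbf{l}\right|^{\alpha}}=\sum_{\mathbf{j}+\mathbf{l}=\mathbf{k}}\frac{1}{\left|\mathbf{j}\right|^{\alpha}\left|\mathbf{l}\right|^{\alpha-1}},
\]
the sum running over $\mathbf{j},\mathbf{l}\neq 0$. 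Splitting into the regions $\left|\mathbf{j}\right|\leq\frac{1}{2}\left|\mathbf{k}\right|$, $\left|\mathbf{l}\right|\leq\frac{1}{2}\left|\mathbf{k}\right|$, and the complementary far region, and comparing each piece with its integral, I expect the bound $C\left|\mathbf{k}\right|^{4-2\alpha}$; convergence of the far piece needs $2\alpha-1>3$, i.e. $\alpha>2$, while the use of $\sum_{\left|\mathbf{j}\right|\leq R}\left|\mathbf{j}\right|^{-\alpha}\sim R^{3-\alpha}$ needs $\alpha<3$. Combining with the two-derivative gain yields a nonlinear contribution bounded in $\Phi\left(\alpha\right)$ by
\[
\left\|u\right\|_{\alpha}^{2}\sup_{\left|\mathbf{k}\right|\geq 1}\left|\mathbf{k}\right|^{\alpha-\left(2\alpha-2\right)}=\left\|u\right\|_{\alpha}^{2}\sup_{\left|\mathbf{k}\right|\geq 1}\left|\mathbf{k}\right|^{2-\alpha}\leq C\left\|u\right\|_{\alpha}^{2},
\]
the supremum being finite precisely because $\alpha>2$.

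With these estimates $\Psi$ maps the ball of radius $R=2\delta$ into itself and is a contraction once $\delta$ is small (depending on the convolution constant, hence on $\alpha$); the Banach fixed-point theorem then produces a unique global $u$, and the same uniform-in-time bounds together with dominated convergence give continuity of $t\mapsto u\left(t\right)$ into $\Phi\left(\alpha\right)$, that is, of $c\left(t,\mathbf{k}\right)=\left|\mathbf{k}\right|^{\alpha}u_{\mathbf{k}}\left(t\right)$ into $L^{\infty}\left(\mathbb{Z}^{3}\setminus\left\{0\right\}\right)$. The main obstacle is the convolution estimate: one must track the $\alpha$-dependence of the constant, which degenerates as $\alpha\downarrow 2$ and forces $\delta$ small, and one must bound the discrete sum uniformly in $\mathbf{k}$ rather than appeal blindly to the continuous Riesz formula. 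The stated hypothesis $3\epsilon<1$ comfortably guarantees $\alpha<3$, which is all the upper bound the dyadic comparison requires.
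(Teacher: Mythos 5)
A preliminary but important point: the statement you were asked to prove is the theorem that this paper \emph{quotes} from \cite{Arnold} (Arnold--Sinai) and does not itself prove; there is no proof of it in the source to compare against line by line. The paper's own machinery for its Theorems \ref{uniqueness}--\ref{Escauriaza} deliberately avoids the mild formulation: it works directly with the infinite-dimensional ODE for the Fourier coefficients, splits the convolution $\sum_{\alpha}\alpha u_{\alpha}u_{\mathbf{k}-\alpha}$ into near/far regions, and bootstraps pointwise decay estimates over dyadic scales in time and frequency. Your proposal instead runs the classical Kato-type contraction argument for the Duhamel formulation in $C\left(\left[0,\infty\right),\Phi\left(\alpha\right)\right)$. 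That is a legitimately different (and closer to Arnold--Sinai's own) route, and its core is sound: the bilinear estimate $\sum_{\mathbf{j}+\mathbf{l}=\mathbf{k}}\left|\mathbf{j}\right|^{-\alpha}\left|\mathbf{l}\right|^{-\left(\alpha-1\right)}\leq C\left|\mathbf{k}\right|^{4-2\alpha}$ is correct for $2<\alpha<3$ (the far region $\left|\mathbf{j}\right|>2\left|\mathbf{k}\right|$ needs $2\alpha-1>3$, the near regions need $\alpha<3$, exactly as you say), the heat factor gains the two powers of $\left|\mathbf{k}\right|$ needed to land back in $\Phi\left(\alpha\right)$ with a factor $\left|\mathbf{k}\right|^{2-\alpha}\leq1$, the spectral gap $\left|\mathbf{k}\right|\geq1$ handles global-in-time uniformity, and the ball/contraction bookkeeping closes for $\delta$ small depending on the convolution constant, which indeed degenerates as $\alpha\downarrow2$. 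What the contraction buys is uniqueness and a clean global statement in one stroke; what the paper's bootstrapping buys (in its own theorems) is applicability to weak solutions that are not known a priori to be mild solutions, and the self-improving decay that upgrades $\Phi\left(2\right)$-smallness to higher regularity.

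One genuine gap to flag: the conclusion that $c\left(t,\mathbf{k}\right)=\left|\mathbf{k}\right|^{\alpha}u_{\mathbf{k}}\left(t\right)$ is continuous from $\left[0,\infty\right)$ into $L^{\infty}\left(\mathbb{Z}^{3}\setminus\left\{0\right\}\right)$ does not follow from dominated convergence at $t=0$. Dominated convergence gives only pointwise-in-$\mathbf{k}$ convergence, whereas the claim is convergence in the supremum norm, and the heat semigroup is \emph{not} strongly continuous at $t=0$ on all of $\Phi\left(\alpha\right)$: for $\psi_{\mathbf{k}}=\delta\left|\mathbf{k}\right|^{-\alpha}$ one has $\sup_{\mathbf{k}}\left|c_{0}\left(\mathbf{k}\right)\right|\left(1-e^{-4\pi^{2}\left|\mathbf{k}\right|^{2}t}\right)=\delta$ for every $t>0$. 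Your Duhamel term is fine (its $\Phi\left(\alpha\right)$-norm is $O\left(t^{\left(\alpha-2\right)/2}\right)$, hence norm-continuous with value $0$ at $t=0$), and norm-continuity for $t>0$ is also fine; the defect is entirely in the linear part at $t=0$. To get the literal statement you must either restrict to initial data in the closure of the trigonometric polynomials in $\Phi\left(\alpha\right)$, or interpret continuity at $t=0$ in the weak-$*$ (pointwise) sense. Everything else in your outline is correct, and the hypothesis $3\epsilon<1$ is, as you suspect, stronger than what the dyadic comparison actually requires (any $\alpha>2$ works).
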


The result with $\alpha=2$ is announced in \cite{Kaloshin}. In this note we prove some related results,
where the treat for the reader is the elementarity of the proofs, which are in the
spirit of the ideas presented in \cite{Sinai} (in this note we use nothing beyond the Cauchy-Schwarz
inequality). 

We shall show the following results,

\begin{theorem}
\label{uniqueness}
There is $\epsilon>0$ such that if a Leray-Hopf solution $u\left(x,t\right)$ of the Navier Stokes system
satisfies 
\begin{equation}
\label{smallnessassumption}
\sup_{\left|\mathbf{k}\right|\geq K}
\left|\mathbf{k}\right|^2\left|u_{\mathbf{k}}\left(t\right)\right|<\epsilon \quad\mbox{on} \quad \left(0,T\right)
\end{equation}
for some $K>0$ independent of time, 
then $u$ is smooth for every $0<t<T$.
\end{theorem}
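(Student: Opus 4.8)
The plan is to work entirely on the Fourier side, as the reference to \cite{Sinai} suggests. Eliminating the pressure by the Leray projection $P_{\mathbf{k}}$ onto $\mathbf{k}^{\perp}$, the system (\ref{Navierstokes}) becomes the coupled family
\[
\frac{d}{dt}u_{\mathbf{k}}+4\pi^2\left|\mathbf{k}\right|^2u_{\mathbf{k}}=-2\pi i\,P_{\mathbf{k}}\sum_{\mathbf{l}+\mathbf{m}=\mathbf{k}}\left<u_{\mathbf{l}},\mathbf{m}\right>u_{\mathbf{m}},
\]
together with $\left<u_{\mathbf{k}},\mathbf{k}\right>=0$. The engine of the argument is the equivalent mild (Duhamel) formulation
\[
u_{\mathbf{k}}\left(t\right)=e^{-4\pi^2\left|\mathbf{k}\right|^2\left(t-t_0\right)}u_{\mathbf{k}}\left(t_0\right)-2\pi i\int_{t_0}^t e^{-4\pi^2\left|\mathbf{k}\right|^2\left(t-s\right)}P_{\mathbf{k}}\sum_{\mathbf{l}+\mathbf{m}=\mathbf{k}}\left<u_{\mathbf{l}},\mathbf{m}\right>u_{\mathbf{m}}\,ds,
\]
which a Leray--Hopf solution satisfies for a.e.\ $t_0>0$. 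I would lean on the two inputs at our disposal: the hypothesis (\ref{smallnessassumption}), controlling the high modes, and the energy inequality, which gives $u\in L^{\infty}\left(\left(0,T\right),\ell^2\right)\cap L^2\left(\left(0,T\right),\dot H^1\right)$, that is, $\sum_{\mathbf{k}}\left|u_{\mathbf{k}}\right|^2$ bounded and $\int_0^T\sum_{\mathbf{k}}\left|\mathbf{k}\right|^2\left|u_{\mathbf{k}}\right|^2\,ds<\infty$.

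First I would record the free bound $u\in L^{\infty}\left(\left(0,T\right),\Phi\left(2\right)\right)$: for $\left|\mathbf{k}\right|<K$ there are finitely many modes and Cauchy--Schwarz gives $\left|u_{\mathbf{k}}\right|\le\left\|u\right\|_{L^2}$, so $\left|\mathbf{k}\right|^2\left|u_{\mathbf{k}}\right|\le K^2\left\|u\right\|_{L^2}$, while for $\left|\mathbf{k}\right|\ge K$ the hypothesis gives $\left|\mathbf{k}\right|^2\left|u_{\mathbf{k}}\right|<\epsilon$; hence $\left\|u\left(t\right)\right\|_2\le M:=K^2\left\|u\right\|_{L^2}+\epsilon$ uniformly on $\left(0,T\right)$. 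The real work is a bootstrap: show by induction that $u\left(t\right)\in\Phi\left(\alpha\right)$ for every $\alpha$ and every $t_0<t<T$, climbing from $\alpha=2$ in steps that exploit the parabolic smoothing $\left|\mathbf{k}\right|^{\beta}e^{-c\left|\mathbf{k}\right|^2\left(t-t_0\right)}\le C\left(t-t_0\right)^{-\beta/2}$ visible in the Duhamel kernel. Since good initial times $t_0$ can be taken arbitrarily small, smoothness on each $\left(t_0,T\right)$ covers all of $\left(0,T\right)$, and Fourier decay of every polynomial order is precisely $C^{\infty}$ smoothness.

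Each step rests on a bilinear estimate for the convolution $N_{\mathbf{k}}=\sum_{\mathbf{l}+\mathbf{m}=\mathbf{k}}\left|u_{\mathbf{l}}\right|\left|\mathbf{m}\right|\left|u_{\mathbf{m}}\right|$, and I would split the sum according to the sizes of $\left|\mathbf{l}\right|$ and $\left|\mathbf{m}\right|$ relative to $\left|\mathbf{k}\right|$ and to $K$. In the regions where one frequency dominates, the larger-frequency factor carries either the decay of the space we are bootstrapping into or, crucially, the smallness $\epsilon$ from (\ref{smallnessassumption}); the residual sum is then a discrete convolution of homogeneous weights, summable by pure power counting and Cauchy--Schwarz, with a genuine $\left|\mathbf{k}\right|$-decay left over.

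The main obstacle, and the reason $\Phi\left(2\right)$ is the genuinely critical and previously only ``announced'' case, is the region where $\left|\mathbf{l}\right|$ and $\left|\mathbf{m}\right|$ are comparable and large: there the naive $\Phi\left(2\right)\times\Phi\left(2\right)$ bound for $N_{\mathbf{k}}$ diverges logarithmically, so the estimate cannot close in $\Phi\left(2\right)$ alone. This is exactly where the two hypotheses must act in tandem: I would extract one factor of $\epsilon$ from one of the two high modes via (\ref{smallnessassumption}) and pair the surviving factor against the $\dot H^1$ energy by Cauchy--Schwarz, so that the remaining homogeneous weight sums to a convergent, $\left|\mathbf{k}\right|$-decaying tail. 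The delicate bookkeeping is to arrange this extraction so that (i) the small constant $\epsilon$ multiplies the very norm being estimated, allowing it to be absorbed, and (ii) each step yields a strict gain in the exponent $\alpha$ rather than merely reproducing it. Balancing the power of $\left|\mathbf{k}\right|$ spent on $\left|\mathbf{k}\right|^{\alpha}$ against the powers recovered from the heat kernel and from the $L^2_t\dot H^1$ integrability is the calculation on which the whole argument turns.
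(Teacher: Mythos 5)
Your overall framework---working mode-by-mode with the Duhamel formula, splitting the convolution by frequency regions, using ({\ref{smallnessassumption}}) on the high modes and the energy bound on the finitely many modes below $K$---is the same as the paper's. But the step on which you explicitly say the whole argument turns, namely that ``each step yields a strict gain in the exponent $\alpha$,'' is precisely the step that fails at the critical exponent $\alpha=2$, and you have not supplied a mechanism that produces it. Under the hypothesis $\left|u_{\mathbf{k}}\right|\leq\epsilon/\left|\mathbf{k}\right|^2$ for $\left|\mathbf{k}\right|\geq K$, the bilinear term $\sum_{\alpha}\alpha u_{\alpha}u_{\mathbf{k}-\alpha}$ is bounded by $C\epsilon^2$ \emph{uniformly in} $\mathbf{k}$, with no decay left over (the region $\left|\alpha\right|\sim\left|\mathbf{k}\right|$ saturates this: $\sum_{\left|\alpha\right|\leq 2\left|\mathbf{k}\right|}\left|\alpha\right|^{-1}\sim\left|\mathbf{k}\right|^{2}$ exactly cancels the $\left|\mathbf{k}\right|^{-2}$ from the other factor). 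Feeding a $\mathbf{k}$-independent bound into the Duhamel integral returns $C\epsilon^{2}/\left|\mathbf{k}\right|^{2}$: an improvement of the \emph{constant}, not of the power of $\left|\mathbf{k}\right|$. Your fallback---pairing one factor against the $L^{2}_{t}\dot H^{1}$ energy by Cauchy--Schwarz in time---gains only a factor $\left|\mathbf{k}\right|^{-1}$ from $\left(\int_0^t e^{-2\left|\mathbf{k}\right|^{2}\left(t-s\right)}ds\right)^{1/2}$ while giving up the full $\left|\mathbf{k}\right|^{-2}$ of the naive kernel integral, so as stated it makes the estimate worse, not better. (Also, your diagnosis of a logarithmic divergence in the comparable-frequency region is not quite right: with the divergence-free rearrangement $\sum\alpha u_{\alpha}u_{\mathbf{k}-\alpha}\leftrightarrow\sum\mathbf{k}u_{\alpha}u_{\mathbf{k}-\alpha}$ on the far region, every piece of the $\Phi\left(2\right)\times\Phi\left(2\right)$ estimate converges; the criticality shows up as absence of decay in the output, not as divergence.)

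The paper's proof supplies exactly the missing device. One iterates the constant-improvement along a doubly exponential ladder of frequency thresholds $k_{n}=k_{0}\epsilon^{-2^{n}}$ and time slices $t_{n}=\rho-\rho/2^{n}$, proving $\left|u_{\mathbf{k}}\left(t\right)\right|\leq\epsilon^{\mu_{n}}/\left|\mathbf{k}\right|^{2}$ for $\left|\mathbf{k}\right|\geq k_{n}$, $t>t_{n}$, with $\mu_{n+1}=2\mu_{n}-1$, so $\mu_{n}\sim 2^{n-1}$. The gain in the exponent then comes not from any single pass but from reading the whole ladder at once: for $k_{n}\leq\left|\mathbf{k}\right|<k_{n+1}$ one has $\epsilon^{\mu_{n}}\leq\left(k_{0}/\left|\mathbf{k}\right|\right)^{1/4}$, whence $\left|u_{\mathbf{k}}\left(t\right)\right|\leq C/\left|\mathbf{k}\right|^{2.25}$ for $t>\rho$. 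Only at that point is the problem subcritical ($u\in L^{\infty}\left(\rho,T;H^{\frac{1}{2}+\frac{1}{8}}\right)$), and only there does your bootstrap in $\alpha$ become available (this is the content of the Remark following the paper's proof, which upgrades $2+\rho$ to $2+2\rho$); the paper instead closes by citing Leray's regularity result. Note also that the paper never needs the $L^{2}_{t}\dot H^{1}$ half of the energy inequality in the bilinear estimate---it uses only $L^{\infty}_{t}L^{2}$ to control the finitely many modes below $k_{-1}$. Without the ladder (or some substitute converting iterated constant gains into polynomial decay), your argument does not leave the critical space, and the proof does not close.
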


\begin{theorem}
\label{smallnorm}
There exists an $\epsilon>0$ such that if the initial condition $\psi$ satisfies 
\[
\left\|\psi\right\|_2 < \epsilon
\] 
then any Leray-Hopf solution of (\ref{Navierstokes}) with initial
condition $\psi$ satisfies
\[
\left\|u\left(t\right)\right\|_2<\epsilon \quad\mbox{if} \quad t>0.
\]
\end{theorem}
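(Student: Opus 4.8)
The plan is to follow the evolution of each Fourier mode and to control the weighted supremum defining $\left\|\cdot\right\|_2$ directly, by a maximum-principle type argument rather than an energy estimate. Applying the Leray projection to eliminate the pressure, the $\mathbf{k}$-th mode of a solution obeys
\[
\frac{d}{dt}u_{\mathbf{k}}+4\pi^2\left|\mathbf{k}\right|^2 u_{\mathbf{k}}=N_{\mathbf{k}}(u),\qquad N_{\mathbf{k}}(u)=-2\pi i\,P_{\mathbf{k}}\sum_{\mathbf{j}+\mathbf{l}=\mathbf{k}}\left(\mathbf{k}\cdot u_{\mathbf{j}}\right)u_{\mathbf{l}},
\]
where $P_{\mathbf{k}}$ is the orthogonal projection onto $\mathbf{k}^{\perp}$ and where I have used $\mathrm{div}\,u=0$, i.e. $\mathbf{j}\cdot u_{\mathbf{j}}=0$, to replace the derivative factor $\mathbf{l}\cdot u_{\mathbf{j}}$ by the full output frequency $\mathbf{k}=\mathbf{j}+\mathbf{l}$. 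This rewriting is the decisive structural observation.

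The heart of the argument is a bilinear bound showing the nonlinearity is controlled in $\Phi(2)$ uniformly in $\mathbf{k}$:
\[
\left|N_{\mathbf{k}}(u)\right|\le 2\pi\left|\mathbf{k}\right|\sum_{\mathbf{j}+\mathbf{l}=\mathbf{k}}\left|u_{\mathbf{j}}\right|\left|u_{\mathbf{l}}\right|\le 2\pi\left|\mathbf{k}\right|\left\|u\right\|_2^2\sum_{\mathbf{j}+\mathbf{l}=\mathbf{k}}\frac{1}{\left|\mathbf{j}\right|^2\left|\mathbf{l}\right|^2}\le C\left\|u\right\|_2^2 .
\]
The first inequality uses $\left|\mathbf{k}\cdot u_{\mathbf{j}}\right|\le\left|\mathbf{k}\right|\left|u_{\mathbf{j}}\right|$ and the non-expansiveness of $P_{\mathbf{k}}$, the second uses the definition of $\left\|\cdot\right\|_2$, and the third rests on the lattice convolution estimate $\sup_{\mathbf{k}\ne 0}\left|\mathbf{k}\right|\sum_{\mathbf{j}+\mathbf{l}=\mathbf{k}}\left|\mathbf{j}\right|^{-2}\left|\mathbf{l}\right|^{-2}<\infty$. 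I would prove the latter by splitting into the regions $\left|\mathbf{j}\right|\le\left|\mathbf{k}\right|/2$, $\left|\mathbf{l}\right|\le\left|\mathbf{k}\right|/2$, and $\min(\left|\mathbf{j}\right|,\left|\mathbf{l}\right|)\ge\left|\mathbf{k}\right|/2$, comparing each lattice sum with its integral (a dyadic decomposition handling the third region) — precisely the elementary counting/Cauchy–Schwarz bookkeeping promised in the introduction. It is the divergence-free rewriting that produces the crucial gain $\left|\mathbf{k}\right|^{-1}$ here: the naively symmetric weight $\left|\mathbf{j}\right|^{-2}\left|\mathbf{l}\right|^{-1}$ would instead give a (logarithmically) divergent sum.

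With this bound in hand I would run a maximum principle on $m(t)=\left\|u(t)\right\|_2$. Since $\frac{d}{dt}\left|u_{\mathbf{k}}\right|\le-4\pi^2\left|\mathbf{k}\right|^2\left|u_{\mathbf{k}}\right|+\left|N_{\mathbf{k}}\right|$, at a frequency $\mathbf{k}^{*}$ realizing the supremum (so that $\left|\mathbf{k}^{*}\right|^2\left|u_{\mathbf{k}^{*}}\right|=m$) one obtains
\[
\frac{d}{dt}\left(\left|\mathbf{k}^{*}\right|^2\left|u_{\mathbf{k}^{*}}\right|\right)\le\left|\mathbf{k}^{*}\right|^2 m\left(Cm-4\pi^2\right).
\]
The damping contributes exactly $-4\pi^2 m$ (the weight $\left|\mathbf{k}^{*}\right|^2$ cancels against $\left|u_{\mathbf{k}^{*}}\right|$), so whenever $m<4\pi^2/C$ the extremal value is strictly decreasing; hence $m$ cannot increase while it stays below $4\pi^2/C$. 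Choosing $\epsilon\le 4\pi^2/C$ then gives: if $m(0)=\left\|\psi\right\|_2<\epsilon$ then $m(t)\le\left\|\psi\right\|_2<\epsilon$ for all $t>0$. The \emph{linear}-in-$m$ damping, retained by the differential formulation, is exactly what lets the same constant $\epsilon$ appear on both sides of the statement; a crude Duhamel bound would only yield $\left\|u(t)\right\|_2\le\left\|\psi\right\|_2+C\left\|\psi\right\|_2^2$, which does not close with a single $\epsilon$.

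The main obstacle is twofold. The convolution lemma is where all the genuine work lies; everything after it is routine. Second, since the statement concerns an arbitrary Leray–Hopf (hence only weak) solution, I must justify differentiating the supremum $m(t)$ over the infinite index set $\mathbb{Z}^3\setminus\{0\}$ and, more basically, that $m(t)$ is finite and continuous for small $t$. I would dispose of this by a preliminary bootstrap: the mild formulation together with the bilinear bound produces, by contraction, a short-time solution with finite and continuous $\Phi(2)$-norm, which agrees with the given Leray–Hopf solution by weak–strong uniqueness; on the interval of finiteness the supremum is attained or approached along a sequence of frequencies, which is enough to run the Dini-derivative version of the estimate above. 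The continuity argument then propagates the bound to all $t>0$, and, combined with Theorem \ref{uniqueness}, delivers smoothness as a byproduct.
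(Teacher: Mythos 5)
Your proof is correct and takes essentially the same route as the paper: the same three-region splitting of the convolution sum (one small frequency, the other small frequency, both frequencies large) to obtain the uniform bound $\left|N_{\mathbf{k}}\left(u\right)\right|\leq C\left\|u\right\|_2^2$, followed by the same barrier argument comparing this to the linear damping at a mode where $\left|\mathbf{k}\right|^2\left|u_{\mathbf{k}}\right|$ reaches the threshold. The only difference is one of care: the paper works formally and appeals to Galerkin approximations for rigor, whereas you explicitly address the finiteness and differentiability of the supremum via a mild-solution bootstrap and Dini derivatives.
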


Recall that a Leray-Hopf solution of (\ref{Navierstokes}) 
with initial data $\psi\in L^2\left(\mathbb{T}^3\right)$ is a function 
$u:\left[0,T\right) \longrightarrow L^2\left(\mathbb{T}^3\right)$ such that
\begin{enumerate}

\item
$u$ is weakly continuous;

\item
\[
u\in L^{\infty}\left(0,T;L^2\left(\mathbb{T}^3\right)\right)\cap L^2\left(0,T;H^1\left(\mathbb{T}^3\right)\right);
\]

\item
$u$ satisfies
\begin{eqnarray*}
\left<u\left(t\right),\varphi\left(t\right)\right>
+\int_0^t 
-\left<u\left(\tau\right),\frac{\partial}{\partial \tau}\varphi\left(\tau\right)\right>
+\left<\nabla u\left(\tau\right),\nabla \varphi \left(\tau\right)\right>+
\left<u\cdot \nabla u,\varphi\right>\, d\tau\\
=\left<u_0,\varphi\left(0\right)\right>
\end{eqnarray*}
for all $\varphi\in C^{\infty}\left(\mathbb{T}^3\times\left[0,T\right)\right)$ with $div \varphi=0$;
and 
\item

the energy inequality
\[
\left\|u\left(t\right)\right\|^2_{L^2\left(\mathbb{T}^3\right)}+
2\int_0^t \left\|\nabla u\left(\tau\right)\right\|^2_{L^2\left(\mathbb{T}^3\right)}\,d\tau\leq 
\left\|\psi\right\|^2_{L^2\left(\mathbb{T}^3\right)}
\]
holds.

\end{enumerate}

For results on the existence of Leray-Hopf solutions of the Navier-Stokes system
with initial data in $L^2\left(\mathbb{T}^3\right)$, the reader may consult
Chapter 3 of 
\cite{Temam}.

The following result
is an inmediate consequence of the existence of a Leray-Hopf solution 
for a given initial data $\psi\in L^2\left(\mathbb{T}^3\right)$, and Theorems \ref{uniqueness} and \ref{smallnorm}.
 
\begin{corollary}
\label{globalsolution}
there is $\epsilon>0$ such that if the initial condition $\psi$ satisfies
\[
\left\|\psi\right\|_2<\epsilon
\]
then there is
a global regular solution to problem (\ref{Navierstokes}).
\end{corollary}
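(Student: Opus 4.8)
The plan is to chain together, in order, the three ingredients quoted just before the statement: the $L^2$ existence theory for Leray-Hopf solutions of Chapter 3 of \cite{Temam}, the norm-propagation result of Theorem \ref{smallnorm}, and the regularity criterion of Theorem \ref{uniqueness}. The first observation is that the hypothesis $\|\psi\|_2 < \epsilon$ is measured in the $\Phi(2)$ norm, and since $2 > 3/2$ we have the embedding $\Phi(2)\subset L^2(\mathbb{T}^3)$ recorded in the introduction; hence $\psi$ is a legitimate $L^2$ datum. Quoting the existence theory, there is then a Leray-Hopf solution $u$ with $u(\cdot,0)=\psi$, defined in the weak sense for all $t>0$.

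Next I would fix the constant of the corollary. Let $\epsilon_1$ be the threshold furnished by Theorem \ref{smallnorm} and $\epsilon_2$ the one furnished by Theorem \ref{uniqueness}, and set $\epsilon=\min(\epsilon_1,\epsilon_2)$. If $\|\psi\|_2<\epsilon\le\epsilon_1$, then Theorem \ref{smallnorm} propagates the smallness forward in time, giving $\|u(t)\|_2<\epsilon$ for every $t>0$. Unwinding the definition of the $\Phi(2)$ norm, this says precisely that $\sup_{\mathbf{k}\ne 0}|\mathbf{k}|^2|u_{\mathbf{k}}(t)|<\epsilon$, a bound that holds uniformly in $t$.

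The third step is to feed this into the regularity criterion. Since $\sup_{|\mathbf{k}|\ge K}|\mathbf{k}|^2|u_{\mathbf{k}}(t)|\le\sup_{\mathbf{k}\ne 0}|\mathbf{k}|^2|u_{\mathbf{k}}(t)|<\epsilon\le\epsilon_2$, the smallness assumption (\ref{smallnessassumption}) is satisfied for any fixed $K$ (for instance $K=1$), with the same $K$ valid for all time, so its hypothesis that $K$ be independent of time is met. Applying Theorem \ref{uniqueness} on each interval $(0,T)$ shows $u$ is smooth there; since the Leray-Hopf solution is global in time, letting $T\to\infty$ assembles these local statements into smoothness on all of $(0,\infty)$, i.e. a global regular solution.

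The only point requiring genuine care — rather than a real obstacle — is the bookkeeping of the two constants: one must ensure that the threshold below which Theorem \ref{smallnorm} preserves the norm can be taken no larger than the regularity threshold of Theorem \ref{uniqueness}, so that the propagated bound actually triggers the criterion. This is why I take $\epsilon$ to be the minimum of $\epsilon_1$ and $\epsilon_2$, appealing if necessary to the invariance mechanism in the proof of Theorem \ref{smallnorm} to keep the propagated bound below $\epsilon_2$. Everything else is immediate once the embedding $\Phi(2)\subset L^2$ bridges the $L^2$ existence theory with the two $\Phi(2)$-based theorems.
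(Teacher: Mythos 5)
Your proposal is correct and follows exactly the route the paper indicates: obtain a Leray--Hopf solution from the $L^2$ existence theory (using $\Phi(2)\subset L^2$), propagate the smallness of the $\Phi(2)$ norm via Theorem \ref{smallnorm}, and invoke Theorem \ref{uniqueness} to conclude regularity, with the $\epsilon$'s reconciled by taking a minimum (legitimate, since the invariance argument in Theorem \ref{smallnorm} works for any sufficiently small threshold). The paper gives no further detail than this, so there is nothing to add.
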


It must pointed out that in Corollary \ref{globalsolution},
as it will be clear later on, one does not require
the initial value $\psi$ to be real valued, nor the 
use of Leray-Hopf's Existence Theorem to prove it, as we have
suggested. we postpone a discussion of this issue 
until the final Section of this paper (see Section \ref{corolario1}). 

Also, as another 
application of the methods used in this note, we show the following "classical" regularity result,

\begin{theorem}
\label{Escauriaza}
Let $u\left(x,t\right)\in L^{\infty}\left(0,T;H^{\frac{1}{2}}\left(\mathbb{T}^3\right)\right)$ be
a solution of (\ref{Navierstokes}). There exists a $\delta>0$ such that for any $\rho>0$,
if
$\left\|u\left(t\right)\right\|_{H^{\frac{1}{2}}\left(\mathbb{T}^3\right)}< \delta$ for $t\in \left(0,T\right)$, then
there is a $K=K\left(\left\|u\right\|_{L^{\infty}\left(0,T;H^{\frac{1}{2}}\left(\mathbb{T}^3\right)\right)},\rho\right)$ 
such that if 
$\left|\mathbf{k}\right|\geq K$ and $t>\rho$, then
\[
\left|u_{\mathbf{k}}\left(t\right)\right|\leq 
\frac{\left(2\left(4+\sqrt{2}\right)L_0\right)/\left(1-2c\sqrt{L_0}\right)}{\left|\mathbf{k}\right|^2},
\]
where 
$\sqrt{L_0}=\sup_{t\in\left(0,T\right)} \left\|u\left(t\right)\right\|_{H^{\frac{1}{2}}\left(\mathbb{T}^3\right)}$
and $c$ is a universal constant.
Therefore if $L_0$ is small enough, $u$ is regular on $\left(0,T\right)$.
\end{theorem}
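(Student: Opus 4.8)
The plan is to work entirely on the Fourier side, in the spirit of \cite{Sinai}. Applying the Leray projection $P_{\mathbf{k}}$ onto the plane orthogonal to $\mathbf{k}$ and solving the resulting scalar linear ODE for each mode, one obtains the Duhamel representation
\[
u_{\mathbf{k}}(t)=e^{-4\pi^2|\mathbf{k}|^2 t}u_{\mathbf{k}}(0)-2\pi i\int_0^t e^{-4\pi^2|\mathbf{k}|^2(t-s)}P_{\mathbf{k}}\Big(\sum_{\mathbf{j}}\langle u_{\mathbf{j}}(s),\mathbf{k}\rangle\,u_{\mathbf{k}-\mathbf{j}}(s)\Big)\,ds.
\]
The divergence-free condition gives $\langle u_{\mathbf{j}},\mathbf{j}\rangle=0$, so $\langle u_{\mathbf{j}},\mathbf{k}\rangle=\langle u_{\mathbf{j}},\mathbf{k}-\mathbf{j}\rangle$ and hence $|\langle u_{\mathbf{j}},\mathbf{k}\rangle|\le\min(|\mathbf{k}|,|\mathbf{k}-\mathbf{j}|)\,|u_{\mathbf{j}}|$; this gain is what allows a scaling-critical estimate to close. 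The two inputs I feed into this identity are the $H^{1/2}$ control $\sum_{\mathbf{m}}|\mathbf{m}|\,|u_{\mathbf{m}}(s)|^2\le L_0$ (so that, writing $c_{\mathbf{m}}=|\mathbf{m}|^{1/2}|u_{\mathbf{m}}|$, one has $\sum_{\mathbf{m}}c_{\mathbf{m}}^2\le L_0$ and $|u_{\mathbf{m}}|\le\sqrt{L_0}\,|\mathbf{m}|^{-1/2}$), and the pointwise decay we are trying to establish.

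First I would fix $\rho>0$ and introduce the bootstrap quantity $A=\sup_{t>\rho}\sup_{|\mathbf{k}|\ge K}|\mathbf{k}|^2|u_{\mathbf{k}}(t)|$, to be controlled by a continuity argument. The heart of the matter is the estimate of the convolution $\sum_{\mathbf{j}}|u_{\mathbf{j}}|\,|u_{\mathbf{k}-\mathbf{j}}|$, which I would split along the two regions $|\mathbf{j}|\ge|\mathbf{k}|/2$ and $|\mathbf{j}|<|\mathbf{k}|/2$. In each region exactly one frequency exceeds $|\mathbf{k}|/2\ge K/2$, and I would bound that large factor in the two complementary ways: pointwise, $|u_{\mathrm{large}}|\le A\,|\mathrm{large}|^{-2}\le 4A\,|\mathbf{k}|^{-2}$ (the loss $(1/2)^{-2}$ producing the constant $4$), and through $H^{1/2}$, $|u_{\mathrm{large}}|\le\sqrt{L_0}\,|\mathrm{large}|^{-1/2}\le\sqrt{2}\,\sqrt{L_0}\,|\mathbf{k}|^{-1/2}$ (the loss $(1/2)^{-1/2}$ producing $\sqrt{2}$), pairing the remaining factor with $c_{\mathbf{m}}$ and invoking Cauchy--Schwarz. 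The companion lattice sums $\sum_{|\mathbf{j}|<|\mathbf{k}|/2}|\mathbf{j}|^{-1}$ and $\sum_{|\mathbf{j}|\ge|\mathbf{k}|/2}|\mathbf{j}|^{-4}|\mathbf{k}-\mathbf{j}|^{-1}$ are then compared with integrals. Combined with $\int_0^t e^{-4\pi^2|\mathbf{k}|^2(t-s)}\,ds\le(4\pi^2|\mathbf{k}|^2)^{-1}$, the aim is a pointwise bound of the schematic form $|\mathbf{k}|^2|u_{\mathbf{k}}(t)|\le 2(4+\sqrt{2})L_0+2c\sqrt{L_0}\,A$, in which the inhomogeneous term is quadratic in the norm while the coefficient of $A$ carries one factor $\sqrt{L_0}$.

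The initial-data term $|\mathbf{k}|^2 e^{-4\pi^2|\mathbf{k}|^2 t}|u_{\mathbf{k}}(0)|$ is disposed of using $t>\rho$: since $|u_{\mathbf{k}}(0)|\le\sqrt{L_0}\,|\mathbf{k}|^{-1/2}$, the quantity $|\mathbf{k}|^{3/2}e^{-4\pi^2|\mathbf{k}|^2\rho}\sqrt{L_0}$ tends to $0$ as $|\mathbf{k}|\to\infty$, so for $K=K(\|u\|_{L^{\infty}H^{1/2}},\rho)$ large enough it is absorbed into the constant above --- this is exactly where the dependence of $K$ on $\rho$ and the norm enters. Taking the supremum over $t>\rho$ and $|\mathbf{k}|\ge K$ turns the pointwise bound into the closed inequality $A\le 2(4+\sqrt{2})L_0+2c\sqrt{L_0}\,A$. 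Because $\sqrt{L_0}<\delta$ with $\delta$ small we have $1-2c\sqrt{L_0}>0$, and solving for $A$ yields precisely $A\le 2(4+\sqrt{2})L_0/(1-2c\sqrt{L_0})$, the asserted bound. If $L_0$ is small this constant is $<\epsilon$, so $\sup_{|\mathbf{k}|\ge K}|\mathbf{k}|^2|u_{\mathbf{k}}(t)|<\epsilon$ on $(\rho,T)$ and Theorem \ref{uniqueness} gives smoothness there; since $\rho>0$ is arbitrary, $u$ is regular on $(0,T)$.

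The main obstacle is the bilinear convolution estimate itself. Since $H^{1/2}$ is scaling-critical in three dimensions, controlling $\sum_{\mathbf{j}}|u_{\mathbf{j}}|\,|u_{\mathbf{k}-\mathbf{j}}|$ by a constant times $|\mathbf{k}|^{-1}$ is borderline: using the $H^{1/2}$ bound on both factors alone produces a sum that grows in $|\mathbf{k}|$, so one is forced to use the $|\mathbf{k}|^{-2}$ decay on the higher frequency to render the lattice sums convergent. The delicate point is then the allocation of the pointwise and the $H^{1/2}$ bounds across the two regions so that the factor multiplying $A$ comes out proportional to $\sqrt{L_0}$ --- hence small and absorbable --- while the inhomogeneous term stays proportional to $L_0$ with the clean constant $2(4+\sqrt{2})$. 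Verifying that the comparisons of the relevant sums with integrals produce these constants requires care, but uses nothing beyond Cauchy--Schwarz and elementary estimates.
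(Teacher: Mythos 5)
Your bilinear estimates are the right ones --- splitting the convolution into a low-high part, where the high frequency must be bounded pointwise by the very decay one is trying to prove and the low frequencies are summed via Lemma \ref{basicestimate}, and a high-high part, where Cauchy--Schwarz against the $H^{1/2}$ norm suffices --- and your heuristic fixed-point relation $A\leq 2(4+\sqrt{2})L_0+2c\sqrt{L_0}A$ correctly predicts the limiting constant. But the argument does not close as written, for two concrete reasons. First, the absorption step $A\leq B+2c\sqrt{L_0}A\Rightarrow A\leq B/(1-2c\sqrt{L_0})$ is vacuous unless you already know $A<\infty$, and for a solution that is merely in $L^{\infty}(0,T;H^{\frac{1}{2}}(\mathbb{T}^3))$ the only a priori pointwise information is $|u_{\mathbf{k}}(t)|\leq\sqrt{L_0}\,|\mathbf{k}|^{-1/2}$, so $\sup_{|\mathbf{k}|\geq K}|\mathbf{k}|^{2}|u_{\mathbf{k}}(t)|$ may be infinite at every single time $t>\rho$; a continuity-in-time argument therefore has no finite starting value to continue from (and a Galerkin truncation, which you do not invoke, only trades this for a bound that blows up with the truncation parameter). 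Second, your Duhamel formula integrates from $s=0$, but the bound $|u_{\mathbf{k}-\mathbf{j}}(s)|\leq A\,|\mathbf{k}-\mathbf{j}|^{-2}$ needed inside the low-high term is, by the definition of $A$, available only for $s>\rho$; for $s\in(0,\rho)$ you have only $H^{1/2}$, which in the low-high region yields a convolution of size $cL_0|\mathbf{k}|^{3/2}$, and the damping factor $e^{-4\pi^{2}|\mathbf{k}|^{2}(t-s)}$ degenerates as $t\downarrow\rho$ and $s\uparrow\rho$. So the self-consistent inequality fails precisely near the left endpoint of the time interval on which $A$ is defined.

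The paper's proof (Lemma \ref{fundamental}) is built to avoid both obstructions: it never posits a finite critical quantity, but manufactures the decay by a double induction on dyadic frequency ranges $|\mathbf{k}|\geq K_{0}2^{n}$ and on the staircase of times $t_{n}=\rho-\rho/2^{n}$. The base case uses only Cauchy--Schwarz in $H^{1/2}$ and heat smoothing over the time lag $\rho/2$ to get $|u_{\mathbf{k}}|\leq 2L_{0}|\mathbf{k}|^{-3/2}$; each subsequent step starts Duhamel at $t_{n}$, where the previously \emph{established} bound with exponent $2-2^{-n}$ holds, keeps a guaranteed time lag $t-t_{n}\geq\rho/2^{n+1}$, and halves the remaining deficit in the exponent, with constants obeying the recursion $D_{n+1}=2c\sqrt{L_{0}}D_{n}+(4+\sqrt{2})L_{0}$, whose fixed point is exactly the constant you guessed. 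To salvage your one-shot absorption you would have to first prove, by some such iteration, that $A$ is finite and that a usable bound already holds at times before $\rho$ --- at which point you will have reproduced the paper's induction. A minor further point: your claim that in each region ``exactly one frequency exceeds $|\mathbf{k}|/2$'' fails in the region $|\mathbf{j}|\geq|\mathbf{k}|/2$, where both frequencies may be large, though that is the harmless (high-high) case.
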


As a consequence of the proof of Theorem \ref{Escauriaza} one can show that a solution to the Navier-Stokes
system that belongs to the space $C\left(\left(0,T\right),H^{\frac{1}{2}}\left(\mathbb{T}^3\right)\right)$ is regular
(the same result, but with $L^3\left(\Omega\right)$,
$\Omega$ a domain with $C^{2+\mu}$ boundary, instead of $H^{\frac{1}{2}}\left(\mathbb{T}^3\right)$ was proved 
by Giga in \cite{Y. Giga} and by von Wahl in \cite{Wahl}), and also that
small discontinuities in $H^{\frac{1}{2}}\left(\mathbb{T}^3\right)$ norm are allowed
 (for the related result on $L^3\left(\Omega\right)$ see \cite{Sohr});
we indicate how this can be done in Section \ref{Seregin2}. 
We must also add that the results of this paper can be generalized to higher dimensions 
(of course with the appropiate obvious hypothesis) without too much effort.

This paper is organized as follows. In Section \ref{seccion2} we give a proof of Theorem \ref{smallnorm};
in Section \ref{seccion3} we give a proof of Theorem \ref{uniqueness}; in Section \ref{Seregin} we give a
proof of Theorem \ref{Escauriaza}; and in Section \ref{lastremarks} we make further comments on the
results of this paper. 

\subsection{Some Remarks and Notation}

The Navier Stokes system can be written in the phase space as follows
\[
u_{\mathbf{k},t}^m= -\left|\mathbf{k}\right|^2 u_{\mathbf{k}}^m
-i\sum k_j u_{\alpha}^j u_{\mathbf{k}-\alpha}^m+
i\sum \frac{k_m k_l k_j}{\left|\mathbf{k}\right|^2}u_{\alpha}^l u_{\mathbf{k}-\alpha}^j 
\]

Notice that by the divergence-free property, one also has that
the infinite dimensional 
ODE system for the Fourier coefficients of the Navier-Stokes equation
can be written as
\[
u_{\mathbf{k},t}^m= -\left|\mathbf{k}\right|^2 u_{\mathbf{k}}^m
-i\sum \alpha_j u_{\alpha}^m u_{\mathbf{k}-\alpha}^j+
i\sum \frac{k_m k_l \alpha_j}{\left|\mathbf{k}\right|^2}u_{\alpha}^l u_{\mathbf{k}-\alpha}^j 
\]

From now on, since all that matters is its assymptotic behavior, we will write the sums
\[
\sum k_j u_{\alpha}^j u_{\mathbf{k}-\alpha}^m , \quad
\sum \frac{k_m k_l k_j}{\left|\mathbf{k}\right|^2}u_{\alpha}^l u_{\mathbf{k}-\alpha}^j \quad \mbox{as}\quad
\sum \mathbf{k} u_{\alpha}u_{\mathbf{k}-\alpha},
\]
\[
\mbox{and}\quad
\sum \alpha_j u_{\alpha}^m u_{\mathbf{k}-\alpha}^j,\quad
\sum \frac{k_m k_l \alpha_j}{\left|\mathbf{k}\right|^2}u_{\alpha}^l u_{\mathbf{k}-\alpha}^j
\quad \mbox{as}\quad 
\sum \alpha u_{\alpha} u_{\mathbf{k}-\alpha}
\]

If $X$ is a "classical" Banach space (like the $L^p$'s or $H^q$'s) we denote its norm by $\left\|\cdot\right\|_X$.
It is also important to notice the following: since the solutions to (\ref{Navierstokes}) are divergence free,
in our estimations sums of the form 
$\sum_{\alpha\in \mathcal{Z}} \alpha u_{\alpha}u_{\mathbf{k}-\alpha}$ are equivalent to sums
of the form $\sum_{\alpha\in \mathcal{Z}}\mathbf{k}u_{\alpha}u_{\mathbf{k}-\alpha}$.

\section{Proof of Theorem \ref{smallnorm}}
\label{seccion2}

Following the analysis and arguments in \cite{Sinai}, all we must show is that
the sum 
\[
\sum_{\alpha}\alpha u_{\alpha} u_{\mathbf{k}-\alpha}
\]
is small compared to the term $-\left|\mathbf{k}\right|^2 u_\mathbf{k}$ whenever
$\left|u_{\mathbf{k}}\right|$ is close to $\frac{\epsilon}{\left|\mathbf{k}\right|^2}$. Here
we work formally, but the arguments can be made rigorous by using Galerkin approximations and then taking limits.
Now that the reader has been warned, we proceed with our calculations.
Let 
\[
\sum_{\mathbf{\alpha}}\alpha u_{\alpha}u_{\mathbf{k}-\alpha}=I + II + III
\]
where
\[
I=\sum_{\left|\alpha\right|\leq 2\left|\mathbf{k}\right|, \left|\mathbf{k}-\alpha\right|\leq \frac{\left|\mathbf{k}\right|}{2}} 
\alpha u_{\alpha}u_{\mathbf{k}-\alpha},
\quad
II=\sum_{\left|\alpha\right|\leq 2\left|\mathbf{k}\right|, \left|\mathbf{k}-\alpha\right|> \frac{\left|\mathbf{k}\right|}{2}}
\alpha u_{\alpha}u_{\mathbf{k}-\alpha},
\]
and
\[
III=\sum_{\left|\alpha\right|>2\left|\mathbf{k}\right|}\mathbf{k} u_{\alpha}u_{\mathbf{k}-\alpha}.
\]

Under the assumption $\left|u_{\alpha}\right|\leq \frac{\epsilon}{\left|\alpha\right|^2}$, we can bound each of these 
terms as follows,
\[
\left|I\right|\leq 2\left|\mathbf{k}\right|\frac{4\epsilon}{\left|\mathbf{k}\right|^2}
\sum_{\left|\mathbf{k}-\alpha\right|\leq \frac{\left|\mathbf{k}\right|}{2}}
\left|u_{\mathbf{k}-\alpha}\right|
\leq \frac{4\epsilon}{\left|\mathbf{k}\right|}C\epsilon\left|\mathbf{k}\right|=4C\epsilon^2.
\]
where $C$ is a constant such that 
\begin{equation}
\label{integralbound1}
\sum_{1\leq\left|\alpha\right|\leq r} \frac{1}{\left|\alpha\right|^2}\leq 
C\int_{1\leq\left|\mathbf{x}\right|\leq r,\,\, \mathbf{x}\in \mathbf{R}^3}\frac{1}{\left|\mathbf{x}\right|^2}\,d\mathbf{x}.
\end{equation}

In a similar way, for a constant $C$ universally defined, we obtain the estimate
\[
\left|II\right|\leq 4C\epsilon^2.
\]

Finally, from
\[
\left|III\right|\leq \left|\mathbf{k}\right|\sum_{\left|\alpha\right|>2\left|\mathbf{k}\right|}
\frac{\epsilon^2}{\left|\alpha\right|^2\left|\mathbf{k}-\alpha\right|^2},
\]
using the elementary estimates
\[
\left|\alpha\right|\leq \left|\mathbf{k}-\alpha\right|+\left|\mathbf{k}\right|\quad
\mbox{and}\quad \left|\mathbf{k}-\alpha\right|\geq \left|\mathbf{k}\right|
\qquad (\mbox{recall that} \quad \left|\alpha\right| >2\left|\mathbf{k}\right|)
\]
we obtain,
\[
\left|III\right|\leq \left|\mathbf{k}\right|
\sum_{\left|\alpha\right|>2\left|\mathbf{k}\right|}\frac{4\epsilon^2}{\left|\alpha\right|^4}
\leq 4C\epsilon^2.
\] 
where $C>0$ is a constant such that
\begin{equation}
\label{integralbound2}
\sum_{\left|\alpha\right|\geq \left|\mathbf{k}\right|} \frac{1}{\left|\alpha\right|^4}\leq 
C\int_{\left|\mathbf{x}\right|\geq 
\left|\mathbf{k}\right|,\,\, \mathbf{x}\in \mathbf{R}^3}\frac{1}{\left|\mathbf{x}\right|^4}\,
d\mathbf{x}\leq \frac{c}{\left|\mathbf{k}\right|}.
\end{equation}

Therefore, for a constant $C>0$ universally defined, the following bound holds,
\[
\left|\sum_{\mathbf{\alpha}}\alpha u_{\alpha}u_{\mathbf{k}-\alpha}\right|\leq
12C\epsilon^2.
\]

From the last inequality it is clear that by taking $\epsilon>0$ sufficiently
small the result follows.

\hfill $\Box$

\section{Proof of Theorem \ref{uniqueness}}
\label{seccion3}

Let $\rho>0$, we will show that there exist an $\epsilon>0$ such that
if $u$ a Leray-Hopf solution fo the Navier-Stokes system satisfies
(\ref{smallnessassumption}), then $u$ is smooth for $t>\rho$. 
Let $\epsilon\in\left(0,\frac{1}{3}\right)$ and assume $u$ satisfies (\ref{smallnessassumption})
for this $\epsilon$.
Then, there is a $k_{-1}$ such that 
\[
\left|u_{\mathbf{k}}\left(t\right)\right|\leq \frac{\epsilon}{\left|\mathbf{k}\right|^2}
\quad \mbox{if}\quad \left|\mathbf{k}\right|\geq k_{-1},
\]
and without loss of generality we can assume that $k_{-1}$ is large enough so that
\begin{equation}
\label{exponentialestimate1}
\exp\left(-\frac{\rho\left|\mathbf{k}\right|^2}{2^{n+1}}\right)<\epsilon^{2^n} \quad
\mbox{holds whenever} \quad \left|\mathbf{k}\right|\geq \frac{k_{-1}}{\epsilon^{2^n}}, \quad 
\mbox{for all} \quad n\in \mathbb{N}.
\end{equation}

Notice that, by the energy inequality satisfied by Leray-Hopf solutions,
there exists a constant $D>0$ such that for all 
frequencies $\mathbf{k}$ such that $\left|\mathbf{k}\right|<k_{-1}$ it holds that 
\[
\left|u_{\mathbf{k}}\left(t\right)\right|\leq \frac{D}{\left|\mathbf{k}\right|^2}
\quad \mbox{for}\quad t>0.
\]

Choose $k_0$ so that $\frac{k_{-1}}{k_0}\cdot D < \epsilon<\frac{1}{2}$.
Define $k_i=\frac{1}{\epsilon^{2^i}}k_0$ 
and  a sequence $\mu_n$ inductively as follows
\[
\mu_0=1, \mu_1=2; \quad\mbox{and}\quad
\mu_{n+1}=2\mu_n-1, \quad\mbox{if} \quad n>1.
\]

The sequence thus defined satisfies the following,
\begin{lemma}
\label{exponentialrecurrence}
For the sequence $\mu_n$ it holds that
$
\frac{1}{2}<\frac{\mu_n}{2^n}<1
$, for $n\geq 2$.
\end{lemma}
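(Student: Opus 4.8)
The plan is to prove the bounds $\frac{1}{2} < \frac{\mu_n}{2^n} < 1$ by induction on $n$, exploiting the fact that the recurrence $\mu_{n+1} = 2\mu_n - 1$ admits an explicit closed form. First I would solve the recurrence directly: the homogeneous part doubles and the inhomogeneous constant $-1$ contributes a fixed point at $1$, so writing $\mu_n = A\cdot 2^n + 1$ and fitting the initial data should yield $\mu_n = 2^{n-1} + 1$ for $n \geq 1$. One checks this against $\mu_1 = 2 = 2^0 + 1$ and $\mu_2 = 2\cdot 2 - 1 = 3 = 2^1 + 1$, and verifies it satisfies the recurrence: $2(2^{n-1}+1) - 1 = 2^n + 1$, as required. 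With the closed form in hand the lemma becomes a triviality.

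Once $\mu_n = 2^{n-1} + 1$ is established, I would simply compute $\frac{\mu_n}{2^n} = \frac{2^{n-1}+1}{2^n} = \frac{1}{2} + \frac{1}{2^n}$. The lower bound $\frac{\mu_n}{2^n} > \frac{1}{2}$ is then immediate since $\frac{1}{2^n} > 0$. For the upper bound, $\frac{1}{2} + \frac{1}{2^n} < 1$ is equivalent to $\frac{1}{2^n} < \frac{1}{2}$, i.e. $2^n > 2$, which holds precisely for $n \geq 2$; this explains the restriction $n \geq 2$ in the statement, since at $n=1$ one has $\frac{\mu_1}{2^1} = 1$ and the strict upper bound fails.

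If one prefers to avoid guessing the closed form, a direct induction also works cleanly. Set $r_n = \frac{\mu_n}{2^n}$ and rewrite the recurrence as $r_{n+1} = \frac{2\mu_n - 1}{2^{n+1}} = r_n - \frac{1}{2^{n+1}}$, so the ratio is strictly decreasing. The base case $r_2 = \frac{3}{4} \in \left(\frac{1}{2}, 1\right)$ is checked by hand. For the inductive step, the upper bound is preserved because $r_{n+1} < r_n < 1$; the lower bound requires showing $r_{n+1} = r_n - \frac{1}{2^{n+1}} > \frac{1}{2}$, which follows from the explicit evaluation $r_n = \frac{1}{2} + \frac{1}{2^n}$ giving $r_{n+1} = \frac{1}{2} + \frac{1}{2^{n+1}} > \frac{1}{2}$.

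There is essentially no obstacle here: this is a routine recurrence with a linear closed form, and the only subtlety is the boundary case $n=1$, where equality $\frac{\mu_1}{2} = 1$ forces the hypothesis $n \geq 2$ for the strict upper bound. The closed-form route is cleanest, so I would present $\mu_n = 2^{n-1}+1$ and read off both inequalities directly.
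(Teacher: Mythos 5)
Your proof is correct and matches the paper's approach: the paper simply asserts the lemma "is easily proved by induction," and you supply exactly that induction, plus the explicit closed form $\mu_n = 2^{n-1}+1$ that makes both bounds immediate. Your observation that $n=1$ gives $\mu_1/2 = 1$, forcing the restriction $n\geq 2$, correctly explains the hypothesis in the statement.
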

\begin{proof}
It is easily proved by induction.
\end{proof}

Now we continue with the proof of Theorem \ref{uniqueness}. Define 
\[
t=\rho-\frac{\rho}{2^n},
\]
and assume that for $t>t_n$ the following estimate 
holds 
\[
\left|u_{\mathbf{k}}\left(t\right)\right|\leq \frac{\epsilon^{\mu_n}}{\left|\mathbf{k}\right|^2}
\quad\mbox{if} \quad \left|\mathbf{k}\right|\geq k_n.
\] 
The idea is to show that this estimate improves for frequencies $\mathbf{k}$
such that $\left|\mathbf{k}\right|\geq k_{n+1}$ and times $t>t_{n+1}$.
Notice that this estimate holds for $n=0$. 

In order to proceed, assume $\left|\mathbf{k}\right|\geq k_{n+1}$. As before, we must estimate the sum
\[
\sum_{\alpha} \alpha u_{\alpha}u_{\mathbf{k}-\alpha}=I+II+III+IV+V
\]
where the meaning of $I,II, III, IV$ and $V$ will become clear in what follows.
Before we start, we  must point out that the constant $C$ that appears in the following estimates is the maximum
between the constant  $C$ that appears in inequality (\ref{integralbound1}) and
the constant $C$ that appears in inequality (\ref{integralbound2}).

Let us then begin by estimating the first term
\[
I= I_1+I_2,
\]
\begin{eqnarray*}
\left|I_1\right|\leq
\sum_{0\leq\left|\alpha\right|\leq k_{-1},\left|\mathbf{k}-\alpha\right|\geq \frac{\left|\mathbf{k}\right|}{2}}
\left|\alpha u_{\alpha}u_{\mathbf{k}-\alpha}\right|
&\leq& k_{-1}\sum_{0\leq\left|\alpha\right|\leq k_{-1},\left|\mathbf{k}-\alpha\right|\geq \frac{\left|\mathbf{k}\right|}{2}}
\left|u_{\alpha}u_{\mathbf{k}-\alpha}\right|\\
&\leq& \frac{4k_{-1}\epsilon^{\mu_n}}{k_{n+1}^2}\sum_{\left|\alpha\right|\leq k_{-1}}\left|u_{\alpha}\right|\\
&&\mbox{(and by inequality (\ref{integralbound1}))}\\
&\leq& 4\epsilon^{\mu_n}D\cdot C\cdot \left(\frac{k_{-1}}{k_n}\right)^2 \left(\frac{k_n}{k_{n+1}}\right)^2,
\end{eqnarray*}
and since $k_0<k_n$, by our choices we obtain
\[
\left|I_1\right|\leq 4\epsilon^{\mu_n}\cdot\frac{1}{2}\cdot C\epsilon^{\mu_n}=2C\epsilon^{2\mu_n}.
\]
On the other hand

\begin{eqnarray*}
\left|I_2\right|&\leq&
\sum_{k_{-1}\leq\left|\alpha\right|\leq k_n,\left|\mathbf{k}-\alpha\right|\geq \frac{\left|\mathbf{k}\right|}{2}}
\left|\alpha u_{\alpha}u_{\mathbf{k}-\alpha}\right|\\
&\leq& k_n\sum_{k_{-1}\leq\left|\alpha\right|\leq k_n,\left|\mathbf{k}-\alpha\right|\geq \frac{\left|\mathbf{k}\right|}{2}}
\left|u_{\alpha}u_{\mathbf{k}-\alpha}\right|\\
&\leq& \frac{4k_n\epsilon^{\mu_n}}{k_{n+1}^2}\sum_{k_{-1}\leq \left|\alpha\right|\leq k_n}\left|u_{\alpha}\right|\\
&\leq& 4\epsilon^{\mu_n}\frac{k_n}{k_{n+1}^2}\cdot C \epsilon k_n \quad \mbox{(by inequality (\ref{integralbound1}))}\\
&\leq& 2C\epsilon^{2\mu_n}.
\end{eqnarray*}

In the previouos estimation, to go from the second to the third line, we have used the fact that 
\[
\left|\mathbf{k}-\alpha\right|\geq \frac{\left|\mathbf{k}\right|}{2}>\frac{k_{n+1}}{2}>k_n.
\]
Hence,
\[
\left|I\right|\leq \left|I_1\right|+\left|I_2\right|\leq 2C\epsilon^{2\mu_n}+2C\epsilon^{2\mu_n}=4C\epsilon^{2\mu_n}.
\]

The second term can be estimated, using inequality (\ref{integralbound1}), as follows
\begin{eqnarray*}
\left|II\right|\leq
\sum_{k_n\leq\left|\alpha\right|\leq 2\left|\mathbf{k}\right|,\left|\mathbf{k}-\alpha\right|\geq
\frac{\left|\mathbf{k}\right|}{2}}\left|\alpha 
u_{\alpha}u_{\mathbf{k}-\alpha}\right|&\leq& \frac{8\epsilon^{\mu_n}}{\left|\mathbf{k}\right|}
\sum_{k_n\leq\left|\alpha\right|\leq 2\left|\mathbf{k}\right|,\left|\mathbf{k}-\alpha\right|\geq
\frac{\left|\mathbf{k}\right|}{2}} \left|u_{\alpha}\right|\\
&\leq& 16\epsilon^{\mu_n}\cdot C\epsilon^{\mu_n}= 16C\epsilon^{2\mu_n}.
\end{eqnarray*}

To estimate the third term we proceed in a similar fashion,
\begin{eqnarray*}
\left|III\right|&\leq&
\sum_{\left|\alpha\right|\leq 2\left|\mathbf{k}\right|,k_n\leq\left|\mathbf{k}-\alpha\right|\leq
\frac{\left|\mathbf{k}\right|}{2}}
\left|\alpha u_{\alpha}u_{\mathbf{k}-\alpha}\right|\\
&\leq& 
\frac{4\epsilon^{\mu_n}}{\left|\mathbf{k}\right|}
\sum_{\left|\alpha\right|\leq 2\left|\mathbf{k}\right|,k_n
\leq\left|\mathbf{k}-\alpha\right|\leq\frac{\left|\mathbf{k}\right|}{2}}
 \left|u_{\mathbf{k}-\alpha}\right|
\leq 2C\epsilon^{2\mu_n}.
\end{eqnarray*}

To estimate the fourth term we split it as $IV=IV_1+IV_2$ and proceed,
\begin{eqnarray*}
\left|IV_1\right|&=&
\sum_{\left|\alpha\right|\leq 2\left|\mathbf{k}\right|,\left|\mathbf{k}-\alpha\right|\leq k_{-1}}
\left|\alpha u_{\alpha}u_{\mathbf{k}-\alpha}\right|\\
&\leq& \sum_{\left|\alpha\right|\leq 2\left|\mathbf{k}\right|,\left|\mathbf{k}-\alpha\right|\leq k_{-1}}
\frac{\epsilon^{\mu_n}}{\left|\alpha\right|}\left|u_{\mathbf{k}-\alpha}\right|\\
&\leq& 
\frac{2\epsilon^{\mu_n}}{k_{n+1}}\cdot D\cdot C\cdot k_{-1} \leq 
2\epsilon^{\mu_n}\frac{k_n}{k_{n+1}}\cdot D \cdot C\cdot \frac{k_{-1}}{k_n}\leq 
C\epsilon^{\mu_n}\epsilon^{2^n}\leq C\epsilon^{2\mu_n}.
\end{eqnarray*}

In the previous 
estimation, to pass 
from the first to the second line we have used the fact that $\left|\mathbf{k}-\alpha\right|\leq k_{-1}$
implies, by the triangular inequality, that 
\[
\left|\alpha\right|\geq \left|\mathbf{k}\right|-k_{-1}\geq k_{n+1}-k_{-1}\geq \frac{1}{2}k_{n+1}>k_n,
\]
and to pass from the second to the third line we made use of inequality (\ref{integralbound1}).
We estimate $IV_2$ as follows,
\begin{eqnarray*}
\left|IV_2\right|&=&
\sum_{\left|\alpha\right|\leq 2\left|\mathbf{k}\right|,k_{-1}\leq \left|\mathbf{k}-\alpha\right|\leq k_n}
\left|\alpha u_{\alpha}u_{\mathbf{k}-\alpha}\right|\\
&\leq& \sum_{\left|\alpha\right|\leq 2\left|\mathbf{k}\right|,k_{-1}\leq \left|\mathbf{k}-\alpha\right|\leq k_n}
\frac{\epsilon^{\mu_n}}{\left|\alpha\right|}\left|u_{\mathbf{k}-\alpha}\right|\\
&\leq& 
\frac{2\epsilon^{\mu_n}}{k_{n+1}}\cdot C\cdot \epsilon k_n
\leq
2\epsilon^{\mu_n}C\frac{k_n}{k_{n+1}}\epsilon \leq 2C\epsilon^{\mu_n}\epsilon^{2^n}\leq 2C\epsilon^{2\mu_n},
\end{eqnarray*}
and again we have used the fact that 
\[
\left|\mathbf{k}-\alpha\right|\leq k_n \quad \mbox{implies}\quad 
\left|\alpha\right|>k_{n+1}-k_n>\frac{1}{2}k_{n+1}, 
\]
and inequality (\ref{integralbound1}). 
Hence, we obtain the bound
\[
\left|IV\right|\leq \left|IV_1\right|+\left|IV_2\right|<3C\epsilon^{2\mu_n}.
\]

Finally, the fifth term yields
\[
\left|V\right|\leq
\sum_{\left|\alpha\right|>2\left|\mathbf{k}\right|}\left|\mathbf{k} u_{\alpha}u_{\mathbf{k}-\alpha}\right|
\leq \left|\mathbf{k}\right|\sum_{\left|\alpha\right|>2\left|\mathbf{k}\right|}
\frac{\epsilon^{\mu_n}}{\left|\alpha\right|^2}\frac{\epsilon^{\mu_n}}{\left|\mathbf{k}-\alpha\right|^2},
\]
and using that $\left|\alpha\right|>2\left|\mathbf{k}\right|$ implies that 
$\left|\mathbf{k}-\alpha\right|<\left|\mathbf{k}\right|$ and inequality (\ref{integralbound2}),
we obtain
\[
\left|V\right|\leq \epsilon^{2\mu_n}\left|\mathbf{k}\right| \sum_{\left|\alpha\right|>2\left|\mathbf{k}\right|}
\frac{1}{\left|\alpha\right|^4}\leq \epsilon^{2\mu_n}\left|\mathbf{k}\right|\cdot
 C\cdot \frac{1}{\left|\mathbf{k}\right|}\leq
 2C\epsilon^{2\mu_n}.
\]

Let $\epsilon>0$ be small enough (say $\epsilon<\frac{1}{54 C}$). Then, for $t>t_n$, we can bound
the nonlinear term as
\begin{eqnarray*}
\left|\sum_{\alpha}\alpha u_{\alpha} u_{\mathbf{k}-\alpha}\right|
&\leq& \left|I\right|+\left|II\right|+\left|III\right|+\left|IV\right|+\left|V\right|\\
&\leq& 4C\epsilon^{2\mu_n}+16C\epsilon^{2\mu_n}+2C\epsilon^{2\mu_n}+2C\epsilon^{2\mu_n}+
3C\epsilon^{2\mu_n}\\
&=& 27C \epsilon^{2\mu_n}\leq\frac{1}{2}\epsilon^{2\mu_n-1}=\frac{1}{2}\epsilon^{\mu_{n+1}} ,\quad
\mbox{if} \quad \left|\mathbf{k}\right|\geq k_{n+1},
\end{eqnarray*}
and integrating the corresponding differential 
inequality for the Fourier coefficients, it follows that
\[
\left|u_{\mathbf{k}}\left(t\right)\right|
\leq \frac{\epsilon^{\mu_n}}{\left|\mathbf{k}\right|^2}\exp\left(-\left|\mathbf{k}\right|^2\left(t-t_n\right)\right)
+\frac{1}{2}\epsilon^{\mu_{n+1}}\left(1-\exp\left(-\left|\mathbf{k}\right|^2\left(t-t_n\right)\right)
\right)\frac{1}{\left|\mathbf{k}\right|^2}
\]
\[
\mbox{as long as}\quad \left|\mathbf{k}\right|\geq k_{n+1} \quad \mbox{and} \quad t>t_{n}.
\]
This shows, using (\ref{exponentialestimate1}) and Lemma \ref{exponentialrecurrence}, that
\[
\left|u_{\mathbf{k}}\left(t\right)\right|
\leq \frac{1}{2}\epsilon^{\mu_n}\epsilon^{2^n}\cdot\frac{1}{\left|\mathbf{k}\right|^2}
+\frac{1}{2}\frac{\epsilon^{\mu_{n+1}}}{\left|\mathbf{k}\right|^2}\leq 
\frac{\epsilon^{\mu_{n+1}}}{\left|\mathbf{k}\right|^2}, 
\]
whenever 
$\left|\mathbf{k}\right|\geq k_{n+1}$ and $t>t_{n+1}$.

Assume that $t>\rho>\rho-\frac{\rho}{2^n}$ ($n\in \mathbb{N}$). Given any $\mathbf{k}$, let
$n\in \mathbb{N}$ large enough so that $k_n\leq \left|\mathbf{k}\right|<k_{n+1}$. Then,
as we just showed, the estimate
\[
\left|u_{\mathbf{k}}\left(t\right)\right|\leq \frac{\epsilon^{\mu_n}}{\left|\mathbf{k}\right|^2}
\quad \mbox{holds}.
\]
Since $\mu_{n}\geq \frac{1}{2}2^n$, it follows that 
$\epsilon^{\mu_{n}}\leq \frac{k_0^{\frac{1}{4}}}{\left|\mathbf{k}\right|^{\frac{1}{4}}}$
whenever $\left|\mathbf{k}\right|\leq k_{n+1}$,
and hence that at time $t>\rho$, for wave numbers $\mathbf{k}\in \mathbb{Z}^3$ large enough,
\[
\left|u_{\mathbf{k}}\left(t\right)\right|\leq \frac{C}{\left|\mathbf{k}\right|^{2.25}} 
\quad\mbox{holds}.
\]

This shows that $u\in L^{\infty}\left(\rho,T;H^{\frac{1}{2}+\frac{1}{8}}\left(\mathbb{T}^3\right)\right)$,
and from the work of Leray (see \cite{Leray}, and \cite{Doering} for a proof of this regularity
result in the periodic case) the Theorem follows.

\hfill $\Box$

\bigskip
{\it Remark. } One can end the proof of the previous Theorem without recurring to Leray's regularity result. Indeed,
it can be shown that if there is a $C$ so that 
\[
\left|u_{\mathbf{k}}\left(t\right)\right|\leq \frac{C}{\left|\mathbf{k}\right|^{2+\rho}}\quad \mbox{for all}
\quad t\in \left(t_{*},T\right)
\]
then for every $\delta>0$ there exists a $D$ such that
\[
\left|u_{\mathbf{k}}\left(t\right)\right|\leq 
\frac{D}{\left|\mathbf{k}\right|^{2+2\rho}} \quad \mbox{for} \quad t\in\left(t_{*}+\delta, T\right).
\] 

Hence, by a finite iterarion we arrive to the fact that the enstrophy remains uniformly bounded on any
interval of the form $\left(t_{*}+\delta, T\right)$ with $\delta>0$. From
this, one can use the method of Mattingly-Sinai (\cite{Sinai}) to conclude that $u$ is analytic in space.

\section{Proof of Theorem \ref{Escauriaza}}
\label{Seregin}

We proceed now with the proof of Theorem \ref{Escauriaza}.
Our first important observation is the following simple,
\begin{lemma}
\label{basicestimate}
Assume $\int \left|\left|\nabla\right|^{\frac{1}{2}} u\right|^2<M$. Then there is a constant 
$c\geq 1$, 
\[
\sum_{0<\left|\alpha\right|\leq r}\left|u_{\alpha}\right|\leq cM^{\frac{1}{2}}r.
\]
\end{lemma}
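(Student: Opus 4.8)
The plan is to reduce everything to Parseval's identity followed by a single application of the Cauchy--Schwarz inequality, in keeping with the elementary spirit of the note. First I would rewrite the hypothesis on the Fourier side: by Parseval's identity the quantity $\int \left|\left|\nabla\right|^{\frac{1}{2}}u\right|^2$ equals, up to the standard $2\pi$ factors (which are harmless since only the asymptotic behavior matters), the weighted sum $\sum_{\alpha}\left|\alpha\right|\left|u_{\alpha}\right|^2$. Thus the assumption becomes
\[
\sum_{\alpha}\left|\alpha\right|\left|u_{\alpha}\right|^2 < M .
\]

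The heart of the argument is to write the summand as $\left|u_{\alpha}\right| = \left(\left|\alpha\right|^{\frac{1}{2}}\left|u_{\alpha}\right|\right)\cdot\left|\alpha\right|^{-\frac{1}{2}}$ and apply Cauchy--Schwarz:
\[
\sum_{0<\left|\alpha\right|\leq r}\left|u_{\alpha}\right|
\leq
\left(\sum_{0<\left|\alpha\right|\leq r}\left|\alpha\right|\left|u_{\alpha}\right|^2\right)^{\frac{1}{2}}
\left(\sum_{0<\left|\alpha\right|\leq r}\frac{1}{\left|\alpha\right|}\right)^{\frac{1}{2}} .
\]
The first factor is bounded by $M^{\frac{1}{2}}$ directly from the rewritten hypothesis, so it remains only to control the second factor.

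For the second factor I would invoke the same integral-comparison technique already used in the paper (cf. inequalities (\ref{integralbound1}) and (\ref{integralbound2})): the lattice sum $\sum_{0<\left|\alpha\right|\leq r}\left|\alpha\right|^{-1}$ is comparable to the integral $\int_{1\leq\left|\mathbf{x}\right|\leq r,\,\mathbf{x}\in\mathbf{R}^3}\left|\mathbf{x}\right|^{-1}\,d\mathbf{x}$, and in three dimensions, passing to spherical coordinates, this integral equals (up to a constant) $\int_1^r \rho^{-1}\cdot\rho^2\,d\rho = \int_1^r \rho\,d\rho \sim \frac{1}{2}r^2$, the volume element contributing the factor $\rho^2$. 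Hence $\sum_{0<\left|\alpha\right|\leq r}\left|\alpha\right|^{-1}\leq C r^2$ for a universal constant $C$, and taking the square root produces a factor of order $r$. Combining the two factors yields the claimed bound $\sum_{0<\left|\alpha\right|\leq r}\left|u_{\alpha}\right|\leq c M^{\frac{1}{2}}r$ with $c=\sqrt{C}$, enlarging $c$ to ensure $c\geq 1$ if necessary.

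There is no genuinely hard step here; the only point requiring care is the dimensional bookkeeping in the integral comparison, namely that in $\mathbf{R}^3$ the integral of $\left|\mathbf{x}\right|^{-1}$ over the shell $1\leq\left|\mathbf{x}\right|\leq r$ grows like $r^2$ rather than more slowly, so that its square root contributes exactly the linear factor $r$ appearing in the statement. This is precisely the kind of estimate codified in (\ref{integralbound1}), and so it can be invoked directly.
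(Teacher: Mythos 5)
Your proposal is correct and follows essentially the same route as the paper: the same Cauchy--Schwarz splitting $\left|u_{\alpha}\right|=\left(\left|\alpha\right|^{\frac{1}{2}}\left|u_{\alpha}\right|\right)\left|\alpha\right|^{-\frac{1}{2}}$, with the first factor controlled by the $H^{\frac{1}{2}}$ hypothesis via Parseval and the second by integral comparison of $\sum_{0<\left|\alpha\right|\leq r}\left|\alpha\right|^{-1}$ with $\int_{1\leq\left|\mathbf{x}\right|\leq r}\left|\mathbf{x}\right|^{-1}\,d\mathbf{x}\sim r^{2}$. You merely spell out the spherical-coordinate computation and the Parseval step that the paper leaves implicit.
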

\begin{proof}
The result follows from a judicious application of the Cauchy-Schwartz inequality.
Indeed,
\[
\sum_{0<\left|\alpha\right|\leq r}\left|u_{\alpha}\right|\leq 
\left(\sum_{0<\left|\alpha\right|\leq r}\left|\alpha\right|u_{\alpha}^2\right)^{\frac{1}{2}}
\left(\sum_{0<\left|\alpha\right|\leq r}\frac{1}{\left|\alpha\right|}\right)^{\frac{1}{2}}\leq cM^{\frac{1}{2}}r,
\]
where $c$ is a constant such that 
\[
\sum_{0<\left|\alpha\right|\leq r} \frac{1}{\left|\mathbf{k}\right|}\leq 
c\int_{1\leq\left|\mathbf{x}\right|\leq r, \,\mathbf{x}\in \mathbf{R}^3}\frac{1}{\left|\mathbf{x}\right|}\,d\mathbf{x}.
\]

Notice that $c$ does not depend on $r$.

\hfill
\end{proof}

\bigskip

\begin{lemma}
\label{fundamental}
Let $u\left(x,t\right)\in L^{\infty}\left(0,T;H^{\frac{1}{2}}\left(\mathbb{T}^3\right)\right)$ be
a solution of (\ref{Navierstokes}). There exists a $\delta>0$ such that for all $\rho>0$, if
$\left\|u\left(t\right)\right\|_{H^{\frac{1}{2}}\left(\mathbb{T}^3\right)}< \delta$ for $t\in\left(0,T\right)$, then
there is a $K_0=K_0\left(\left\|u\right\|_{L^{\infty}\left(0,T;H^{\frac{1}{2}}\left(\mathbb{T}^3\right)\right)},\rho\right)$ 
and there is an $N=N\left(\left\|u\right\|_{L^{\infty}\left(0,T;H^{\frac{1}{2}}\left(\mathbb{T}^3\right)\right)}\right)$
such that if $\left|\mathbf{k}\right|\geq K_0 2^n$,
$n\geq N$, then
\begin{equation}
\label{fundamentalnorm}
\left|u_{\mathbf{k}}\left(t\right)\right|\leq \frac{\sqrt{2}\left(4+\sqrt{2}\right)L_0/\left(1-2c\sqrt{L_0}\right)}
{\left|\mathbf{k}\right|^{2-\gamma_n}},
\quad \gamma_n=\frac{1}{2^n}\quad
if \quad t\geq t_{n},
\end{equation}
where 
$t_n=\rho-\frac{\rho}{2^n}$ and
$\sqrt{L_0}=\sup_{t\in\left(0,T\right)} \left\|u\left(t\right)\right\|_{H^{\frac{1}{2}}\left(\mathbb{T}^3\right)}$
and $c$ as in Lemma \ref{basicestimate}.
\end{lemma}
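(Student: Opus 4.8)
The plan is to prove \eqref{fundamentalnorm} by induction on $n$, reading it as a self‑improving estimate whose decay exponent $2-\gamma_n$ climbs toward $2$ as $n$ grows. Writing $R_n=K_02^n$ and $A=\sqrt2(4+\sqrt2)L_0/(1-2c\sqrt{L_0})$, I would fix a mode $\mathbf{k}$ and work from the integrated (Duhamel) form of its equation, taking absolute values in
$u_{\mathbf{k}}(t)=e^{-|\mathbf{k}|^2(t-t_n)}u_{\mathbf{k}}(t_n)+\int_{t_n}^t e^{-|\mathbf{k}|^2(t-\tau)}\bigl(\sum_\alpha\alpha u_\alpha u_{\mathbf{k}-\alpha}\bigr)(\tau)\,d\tau$, exactly as in Section~\ref{seccion3}. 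The inductive hypothesis is that \eqref{fundamentalnorm} holds at level $n$ for $|\mathbf{k}|\ge R_n$ and $\tau\ge t_n$; the inductive step must promote it to level $n+1$, i.e.\ to the larger exponent $2-\gamma_{n+1}$, the larger threshold $R_{n+1}=2R_n$, and the later time $t_{n+1}$. The base case $n=N$ I would extract from the raw bound $|u_{\mathbf{k}}|\le\sqrt{L_0}|\mathbf{k}|^{-1/2}$ supplied by the $H^{1/2}$ norm together with one application of the smoothing below, choosing $N$ large enough that $R_n^{\gamma_n}$ is bounded uniformly in $n$.

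For the inductive step I would bound $\sum_\alpha\alpha u_\alpha u_{\mathbf{k}-\alpha}$ for $|\mathbf{k}|\ge R_{n+1}$ by splitting the lattice according to the sizes of $|\alpha|$ and $|\mathbf{k}-\alpha|$ relative to $R_n$ and to $|\mathbf{k}|$, in direct analogy with the five regions of Section~\ref{seccion3}. In each region I would use the divergence‑free relation to move the derivative factor onto the \emph{lower} of the two frequencies, and then apply Cauchy--Schwarz in the weighted form already used in Lemma~\ref{basicestimate}: the genuinely low part $|\alpha|\le R_n$ is controlled through $\sum_{|\alpha|\le R_n}|u_\alpha|\le c\sqrt{L_0}\,R_n$, while the high tails are controlled through $\sum_{|\alpha|\ge R}|u_\alpha|^2\le L_0/R$, both issuing from $\sqrt{L_0}=\sup_t\|u(t)\|_{H^{1/2}}$. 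Distributing the single derivative between the two $\dot H^{1/2}$ weights is the crucial trick: it is what keeps the comparable‑frequency (``resonant'') interactions from costing a full extra power of $|\mathbf{k}|$. Summing the boundedly many regions then produces, pointwise in $\tau$, a bound whose terms fall into an $A$‑independent group of total size $\sqrt2(4+\sqrt2)L_0$ and a group proportional to the current level constant carrying the small factor $2c\sqrt{L_0}$.

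I would then feed this into the Duhamel formula. The linear term is handled by parabolic smoothing: since $t-t_n\ge t_{n+1}-t_n=\rho/2^{n+1}$, the factor $e^{-|\mathbf{k}|^2\rho/2^{n+1}}$ defeats $|\mathbf{k}|^{\gamma_n-\gamma_{n+1}}=|\mathbf{k}|^{\gamma_{n+1}}$ once $|\mathbf{k}|\ge R_n$, provided $K_0$ (hence $N$) is chosen so that the analogue of the a priori inequality \eqref{exponentialestimate1} holds; this is precisely what upgrades the exponent from $2-\gamma_n$ to $2-\gamma_{n+1}$. Dividing the nonlinear bound by $|\mathbf{k}|^2$ and collecting everything at the new exponent gives a level‑$(n+1)$ constant obeying the recursion $A_{n+1}=\sqrt2(4+\sqrt2)L_0+2c\sqrt{L_0}\,A_n$. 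The smallness assumption $\|u(t)\|_{H^{1/2}}<\delta$ guarantees $2c\sqrt{L_0}<1$, so this affine recursion has the finite, positive fixed point $A=\sqrt2(4+\sqrt2)L_0/(1-2c\sqrt{L_0})$ and is monotone toward it; hence $A_n\le A$ for every $n$, which is exactly the constant appearing in \eqref{fundamentalnorm}.

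The main obstacle is the exponent bookkeeping in the inductive step. Naively the comparable‑frequency interactions only reproduce the exponent $2-\gamma_n$, so the gain of $\gamma_{n+1}$ must be won simultaneously from two sources: the $\dot H^{1/2}$‑weighted Cauchy--Schwarz, which has to be arranged so that the lone derivative is never left undistributed on a high frequency, and the parabolic smoothing over the window $[t_n,t_{n+1}]$, which must be quantified uniformly in $n$. Making these two gains cooperate — while keeping every region constant inside the budget $\sqrt2(4+\sqrt2)$ and $2c\sqrt{L_0}$ that the stated $A$ permits, and checking that $K_0$ and $N$ can be selected so that both the smoothing inequality and the bound on $R_n^{\gamma_n}$ hold for all $n\ge N$ — is where essentially all the difficulty lies; the remainder is the routine region‑by‑region Cauchy--Schwarz already rehearsed in Section~\ref{seccion3} and in Lemma~\ref{basicestimate}.
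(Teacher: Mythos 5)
Your overall architecture matches the paper's: induction on $n$, the Duhamel representation, Cauchy--Schwarz with the half-derivative split between the two factors, Lemma \ref{basicestimate} for the low-frequency sums, and the affine recursion $D_{n+1}=2c\sqrt{L_0}D_n+(4+\sqrt2)L_0$ whose fixed point gives the constant in \eqref{fundamentalnorm}. But there is a genuine gap exactly where you say "essentially all the difficulty lies," and your proposed decomposition does not close it. You cut the lattice at the thresholds $R_n=K_02^n$ and $|\mathbf{k}|$, "in direct analogy with the five regions of Section \ref{seccion3}." That decomposition cannot produce the exponent upgrade from $2-\gamma_n$ to $2-\gamma_{n+1}$. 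In the comparable-frequency region $R_n\leq|\mathbf{k}-\alpha|\leq\frac{1}{2}|\mathbf{k}|$, distributing the derivative as $|\alpha|\leq\sqrt{|\alpha|}\sqrt{2|\mathbf{k}|}$ and applying the two-weight Cauchy--Schwarz still costs a factor $\sqrt{|\mathbf{k}|}$ (this is precisely why the paper's base case only reaches exponent $\frac{3}{2}$); alternatively, putting the induction hypothesis on $u_{\alpha}$ and Lemma \ref{basicestimate} on the ball $|\mathbf{k}-\alpha|\leq\frac{1}{2}|\mathbf{k}|$ yields $c\sqrt{L_0}D_n|\mathbf{k}|^{\gamma_n}$, which after division by $|\mathbf{k}|^2$ merely reproduces the exponent $2-\gamma_n$. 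Your hoped-for second source of gain, parabolic smoothing over $[t_n,t_{n+1}]$, cannot rescue this: the exponential factor only kills the homogeneous term $|u_{\mathbf{k}}(t_n)|e^{-|\mathbf{k}|^2(t-t_n)}$ (that is the sole role of \eqref{exponentialsmallness2}, the analogue of \eqref{exponentialestimate1}); the Duhamel integral of the nonlinearity is bounded by $\sup_\tau|F(\tau)|/|\mathbf{k}|^2$ with no extra decay, since $\tau$ ranges up to $t$.

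The missing device is the paper's $n$-dependent, $|\mathbf{k}|$-dependent cut at $|\mathbf{k}-\alpha|=\frac{1}{2}|\mathbf{k}|^{1-\gamma_n}$, a sublinear power of $|\mathbf{k}|$ rather than a fixed radius. On $\{|\alpha|\leq2|\mathbf{k}|,\ |\mathbf{k}-\alpha|\geq\frac{1}{2}|\mathbf{k}|^{1-\gamma_n}\}$ one gets $\sqrt{|\alpha|}\leq2\cdot2^{\gamma_{n+1}}|\mathbf{k}|^{\gamma_{n+1}}\sqrt{|\mathbf{k}-\alpha|}$, so the two-$\dot H^{1/2}$-weight Cauchy--Schwarz closes with only the affordable loss $|\mathbf{k}|^{\gamma_{n+1}}$ (giving the $4L_0|\mathbf{k}|^{\gamma_{n+1}}$ term). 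On the complementary ball $|\mathbf{k}-\alpha|\leq\frac{1}{2}|\mathbf{k}|^{1-\gamma_n}$, the radius of the ball in Lemma \ref{basicestimate} is exactly cancelled by the factor $|\alpha|^{-(1-\gamma_n)}\sim|\mathbf{k}|^{-(1-\gamma_n)}$ from the induction hypothesis, producing the clean $c\sqrt{L_0}D_n$ with no residual power of $|\mathbf{k}|$. Without this adapted cut, the resonant region leaves you stuck at exponent $2-\gamma_n$ (or worse), and the induction does not advance.
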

\begin{proof}
First, given $\rho>0$ choose $K_0$ large enough so that for all $n$, and $l>n$,
\begin{equation}
\label{exponentialsmallness2}
\exp\left(-\left(K_0\cdot 2^l\right)^2\cdot \frac{\rho}{2^n}\right)< \frac{\sqrt{L_0}}{\left(K_0 2^l\right)^2}.
\end{equation}

Now we proceed to show estimate (\ref{fundamentalnorm}) by induction. We begin by showing the result for $n=1$.
Write,
\[
\sum_{\alpha} \alpha u_{\alpha}u_{\mathbf{k}-\alpha}=I+II
\]
where the meaning of the terms on the right hand side will become clear in what follows. We
begin by estimating $I$ as
\begin{eqnarray*}
\left|I\right|&=&
\left|\sum_{\left|\alpha\right|<2\left|\mathbf{k}\right|} \alpha u_{\alpha} u_{\mathbf{k}-\alpha}\right|
\leq
\sum_{\left|\alpha\right|<2\left|\mathbf{k}\right|} 
\left|\alpha u_{\alpha} u_{\mathbf{k}-\alpha}\right|\\
&\leq& \sqrt{2}\sqrt{\left|\mathbf{k}\right|}
\sum_{\left|\alpha\right|<2\left|\mathbf{k}\right|}
\left|\sqrt{\left|\alpha\right|}u_{\alpha}u_{\mathbf{k}-\alpha}\right|,
\end{eqnarray*}
and then the Cauchy-Schwarz inequality yields,
\[
\left|I\right|\leq \sqrt{2}\sqrt{\left|\mathbf{k}\right|}
\left(\sum_{\left|\alpha\right|<2\left|\mathbf{k}\right|}\left|\alpha\right|u_{\alpha}^2\right)^{\frac{1}{2}}
\left(\sum_{\left|\alpha\right|<2\left|\mathbf{k}\right|}u_{\mathbf{k}\alpha}^2\right)^{\frac{1}{2}}
\leq 
\sqrt{2}\sqrt{\left|\mathbf{k}\right|}\left\|u\right\|_{H^{\frac{1}{2}}\left(\mathbb{T}^3\right)}
\left\|u\right\|_{L^2\left(\mathbb{T}^3\right)}.
\]

We estimate $II$ as follows,
\begin{eqnarray*}
\left|II\right|&=&
\left|\sum_{\left|\alpha\right|\geq 2\left|\mathbf{k}\right|}\alpha u_{\alpha} u_{\mathbf{k}-\alpha}\right|\\
&\leq&
\sqrt{2}\sum_{\left|\alpha\right|\geq 2\left|\mathbf{k}\right|}\sqrt{\left|\alpha\right|} \left|u_{\alpha}\right|
 \sqrt{\left|\mathbf{k}-\alpha\right|}\left|u_{\mathbf{k}-\alpha}\right|
\leq  \sqrt{2}\left\|u\right\|^2_{H^{\frac{1}{2}}\left(\mathbb{T}^3\right)}.
\end{eqnarray*}
The last inequality in the previous estimation follows, once again, from Cauchy-Schwarz.

Recall that
\[
\sqrt{L_0}=\sup_{t\in\left(0,T\right)}\left\|u\left(t\right)\right\|_{H^{\frac{1}{2}}\left(\mathbb{T}^3\right)}.
\]

Then, the previous estimates, by integrating the differential inequality obtained for the 
Fourier coefficients, yield the following bound
\[
\left|u_{\mathbf{k}}\left(t\right)\right|\leq \sqrt{L_0}\exp\left({-\frac{\rho\left|\mathbf{k}\right|^2}{2}}\right)+
\frac{\sqrt{2}L_0}{\left|\mathbf{k}\right|^{\frac{3}{2}}}
+\frac{\sqrt{2}L_0}{\left|\mathbf{k}\right|^2}
\leq \frac{2 L_0}{\left|\mathbf{k}\right|^{\frac{3}{2}}}
\]
if $\left|\mathbf{k}\right|\geq K_0$, for $K_0$ large enough, as long as $t>\rho-\frac{\rho}{2}=\frac{\rho}{2}$.
Here we have used the fact that for a given $w\in H^{\frac{1}{2}}\left(\mathbb{T}^3\right)$, if $w_0=0$ then 
$\left\|w\right\|_{L^2\left(\mathbb{T}^3\right)}\leq \left\|w\right\|_{H^{\frac{1}{2}}\left(\mathbb{T}^3\right)}$.

Now assume that for $t>t_n$ the estimate holds for
\[
\left|u_{\mathbf{k}}\left(t\right)\right|\leq \frac{D_n}{\left|\mathbf{k}\right|^{2-\gamma_n}}
\quad \mbox{holds for} \quad \left|k\right|\geq K_0 2^n.
\]
We will show that if $t>t_{n+1}$, $n\geq 1$, then the following estimate holds
\[
\left|u_{\mathbf{k}}\left(t\right)\right|\leq \frac{D_{n+1}}{\left|k\right|^{2-\gamma_{n+1}}}
\quad\mbox{if} \quad
\left|\mathbf{k}\right|\geq K_0 2^{n+1},
\]
where (with $c$ as in Lemma \ref{basicestimate})
\begin{equation}
\label{basicrecurrence}
D_{n+1}=2c\sqrt{L_0}D_n + \left(4+\sqrt{2}\right)L_0,
\quad D_0=2L_0.
\end{equation}
In order to do this, again we must estimate
\[
\sum_{\alpha}\alpha u_{\alpha}u_{\mathbf{k}-\alpha}= I+II+III
\]
where
\[
I=\sum_{\left|\alpha\right|\leq 2\left|\mathbf{k}\right|, \left|\mathbf{k}-\alpha\right|\geq 
\frac{1}{2}\left|\mathbf{k}\right|^{1-\gamma_n}}\alpha u_{\alpha}u_{\mathbf{k}-\alpha},\quad
II=\sum_{\left|\mathbf{k}-\alpha\right|\leq \frac{1}{2}\left|\mathbf{k}\right|^{1-\gamma_n}}\alpha
 u_{\alpha}u_{\mathbf{k}-\alpha}
\]
and
\[
III=\sum_{\left|\alpha\right|\geq 2\left|\mathbf{k}\right|}\alpha u_{\alpha}u_{\mathbf{k}-\alpha}.
\]

From now on, the assumption $\left|\mathbf{k}\right|>K_02^{n+1}$ is in place.
Let us estimate $I$. First notice that if $\left|\alpha\right|\leq 2\left|\mathbf{k}\right|$
and $\left|\mathbf{k}-\alpha\right|\geq \frac{1}{2} \left|\mathbf{k}\right|^{1-\gamma_n}$ then 
the following inequalities hold
\[
\left|\mathbf{k}-\alpha\right|\geq \frac{1}{2}\left|\mathbf{k}\right|^{1-\gamma_n}\geq 
\frac{1}{2}\left(\frac{1}{2}\right)^{1-\gamma_n}\left|\alpha\right|^{1-\gamma_n}
\geq \frac{1}{4}\left|\alpha\right|^{1-\gamma_n}.
\]

By taking square roots, from the previous inequalities we deduce that
\[
2\sqrt{\left|\mathbf{k}-\alpha\right|}
\geq \left|\alpha\right|^{\frac{1}{2}-\gamma_{n+1}},
\]
and hence if $\left|\alpha\right|\leq 2\left|\mathbf{k}\right|$ we get
\[
\sqrt{\left|\alpha\right|}\leq 2\left|\alpha\right|^{\gamma_{n+1}}\sqrt{\left|\mathbf{k}-\alpha\right|}
\leq 2 \cdot 2^{\gamma_{n+1}}\left|\mathbf{k}\right|^{\gamma_{n+1}}\sqrt{\left|\mathbf{k}-\alpha\right|}.
\]
Therefore, $I$ can be estimated as
\[
\left|I\right|\leq 2\cdot 2^{\gamma_{n+1}}\left|\mathbf{k}\right|^{\gamma_{n+1}}
\sum \sqrt{\left|\alpha\right|}\left|u_{\alpha}\right|\sqrt{\left|\mathbf{k}-\alpha\right|}
\left|u_{\mathbf{k}-\alpha}\right| \leq
4L_0 \left|\mathbf{k}\right|^{\gamma_{n+1}},
\]
where the last inequality follows from the Cauchy-Schwarz inequality.

Let us estimate $II$.
From the induction hypothesis, we find that
\[
\left|II\right|\leq \frac{D_n}{\left(1-\frac{1}{2}\right)^{1-\gamma_n}\left|\mathbf{k}\right|^{1-\gamma_n}}
\sum_{\left|\mathbf{k}-\alpha\right|\leq 
\frac{1}{2}\left|\mathbf{k}\right|^{1-\gamma_n}} \left|u_{\mathbf{k}-\alpha}\right|
\leq \frac{c\frac{1}{2}}{1-\frac{1}{2}}\sqrt{L_0}D_n=c\sqrt{L_0}D_n,
\]
since 
$\left|\mathbf{k}-\alpha\right|\leq \frac{1}{2}\left|\mathbf{k}\right|^{1-\gamma_n}$
implies that 
\[
\left|\alpha\right|\geq \left(1-\frac{1}{2}\right)\left|\mathbf{k}\right|>K_0 2^n
\]
(i.e., the first inequality follows from the induction hypothesis and the assumption 
$\left|\mathbf{k}\right|>K_0 2^{n+1}$).
The last inequality in the estimation of $II$ follows from Lemma \ref{basicestimate}.

Finally, again by the Cauchy-Schwarz inequality, we have that
\[
\left|III\right| \leq\sum_{\left|\alpha\right|>2\left|\mathbf{k}\right|}
\sqrt{\left|\alpha\right|}\sqrt{2\left|\mathbf{k}-\alpha\right|}\left|u_{\alpha}\right|
\left|u_{\mathbf{k}-\alpha}\right| \\
\leq \sqrt{2}L_0.
\]
Hence, integrating the ODE system for the Fourier coefficients yields,
\begin{eqnarray*}
\left|u_{\mathbf{k}}\left(t\right)\right|
&\leq&
\sqrt{L_0}\exp\left(-\left|\mathbf{k}\right|^2\left(t-t_n\right)\right)\\
&&+\left(\frac{c\sqrt{L_0}D_n}{\left|\mathbf{k}\right|^2}+\frac{\sqrt{2}L_0}{\left|\mathbf{k}\right|^2}
+\frac{4L_0}{\left|\mathbf{k}\right|^{2-\gamma_{n+1}}}\right)
\left(1-\exp\left(-\left|\mathbf{k}\right|^2\left(t-t_n\right)\right)
\right),
\end{eqnarray*}
for
\[
\left|\mathbf{k}\right|\geq K_0 2^{n+1} \quad\mbox{and}\quad t>t_n.
\]

Therefore,
by our choice of $K_0$ (given by (\ref{exponentialsmallness2})), we obtain the following estimate, valid for all
frequencies $\mathbf{k}$ such that $\left|\mathbf{k}\right|\geq K_0 2^{n+1}$ and $t>t_{n+1}$,
\[
\left|u_{\mathbf{k}}\left(t\right)\right|< \frac{2c\sqrt{L_0} D_n+\left(4+\sqrt{2}\right)L_0}
{\left|\mathbf{k}\right|^{2-\gamma_{n+1}}}.
\]

Finally, to prove estimate (\ref{basicrecurrence}), 
all that is left to show is how to bound the sequence $\left(D_n\right)_{n=0,1,2,\dots}$ effectively
for large $n$.
This can be done as long as $2c\sqrt{L_0}<1$, and it can be seen in this case that
\[
\lim_{n\rightarrow \infty} D_n= \frac{\left(4+\sqrt{2}\right)L_0}{1-2c\sqrt{L_0}}.
\]
This finishes the proof of the Lemma (take $\delta\leq\frac{1}{2c}$).

\hfill
\end{proof}

Theorem \ref{Escauriaza} follows immediatly from the previous Lemma. Indeed,

\bigskip

{\bf{\it Proof of Theorem \ref{Escauriaza}. }}\normalfont Since for $n$ large enough 
$K_0^{\gamma_n}2^{\frac{n}{2^n}}\leq \sqrt{2}$, if $\mathbf{k}$ satisfies
that $K_02^n\leq\left|\mathbf{k}\right|<K_0 2^{n+1}$ and $t>t_{n+1}$, Lemma \ref{fundamental} yields
\[
\left|u_{\mathbf{k}}\left(t\right)\right|\leq 
\frac{\left|\mathbf{k}\right|^{\gamma_n}\cdot\sqrt{2}\left(4+\sqrt{2}\right)L_0/\left(1-2c\sqrt{L_0}\right)}
{\left|\mathbf{k}\right|^2}
\leq
\frac{2\left(4+\sqrt{2}\right)L_0/\left(1-2c\sqrt{L_0}\right)}{\left|\mathbf{k}\right|^2}.
\]
Therefore if $t>\rho$, for wave numbers $\mathbf{k}$ large enough the required estimate holds, and Theorem \ref{Escauriaza}
 is proved. 

\hfill $\Box$

\section{Last Remarks}
\label{lastremarks}

\subsection{Corollary from the Proof of Theorem \ref{Escauriaza}}\label{Seregin2}
Notice that if there is a $k_0$ so that for all $t\in\left(t_0,t_1\right)$ the expression
\begin{equation}
\label{smallgap}
\sum_{\left|\alpha\right|\geq k_0} \left|\alpha\right|\left|u_{\alpha}\right|^2<\delta 
\quad\mbox{for} \quad \delta \quad\mbox{small enough}
\end{equation}
then it follows from the proof of Theorem \ref{Escauriaza} that for any $\rho>0$ there is a $K_1$ 
which depends on $\rho$ (but not on $\delta>0$ for small $\delta$), such that
\[
\sup_{\left|\mathbf{k}\right|\geq K_1}\left|\mathbf{k}\right|^2\left|u_{\mathbf{k}}\left(t\right)\right|\leq C\delta
\quad \mbox{for} \quad t\in\left(t_0+\rho,t_1\right),
\]
and the constant $C$ is also independent of $\delta>0$ for $\delta$ small.
From Theorem \ref{uniqueness} it 
then follows that in the case that (\ref{smallgap}) holds on $\left(t_0,t_1\right)$,
for $\delta>0$ small enough,
then $u$ is a smooth solution of the Navier-Stokes equation on $\left(t_0,t_1\right)$.
On the other hand (\ref{smallgap}) holds
whenever $u\in C\left(\left(t_0,t_1\right),H^{\frac{1}{2}}\left(\mathbb{T}^3\right)\right)$,
and hence $C\left(\left(t_0,t_1\right),H^{\frac{1}{2}}\left(\mathbb{T}^3\right)\right)$ is
a regularity class for the Navier-Stokes equation
(this is a classical result due to Giga in \cite{Y. Giga} and v. Wahl in \cite{Wahl}).
 Also, there is an $\eta>0$ such that (\ref{smallgap}) holds whenever $u$ satisfies de following
property
\[
\limsup_{t\rightarrow t_2^-}
\left\|u\left(t\right)-u\left(t_2\right)\right\|_{H^{\frac{1}{2}}\left(\mathbb{T}^3\right)}<\eta,
\quad \mbox{for all} \quad t_2\in \left(t_0,t_1\right),
\]
and hence, in such a case ("small discontinuities in $H^{\frac{1}{2}}\left(\mathbb{T}^3\right)$ are
allowed"), $u$ is smooth.

This last assertion should be compared with the recent results in \cite{Cheskidov},
and the main result in \cite{Sohr}.

\subsection{On Corollary \ref{globalsolution}}\label{corolario1}
As we said in the introduction, among the hypothesis of Corollary \ref{globalsolution}, 
it is not needed to have $\psi$, the initial condition, to be real-valued nor we need
to use Leray-Hopf's Existence Theorem (as in the proof we have suggested in the introduction).
Indeed, from the proof of Theorem \ref{smallnorm},
it is easy to see that there is an $\epsilon>0$ such that if the initial condition $\psi$
(real-valued or not)
satisfies 
\[
\left\|\psi\right\|_2< \epsilon,
\]
then the solution to any finite dimensional Galerkin approximation
to (\ref{Navierstokes}) has $\Phi\left(2\right)$ norm smaller than $\epsilon$. Therefore, 
the $L^2\left(\mathbb{T}^3\right)$-norms of the Galerkin approximations to 
(\ref{Navierstokes}) remain uniformly bounded, and hence, using the 
weak compactness of $L^2\left(\mathbb{T}^3\right)$, 
we obtain
a weak solution to the Navier-Stokes 
system (in the sense of Leray-Hopf, except that the energy inequality may not hold), 
and by the proof of Theorem \ref{uniqueness} (see Remark after the end of the proof of Theorem
\ref{uniqueness}, and also take into account that
if $u\left(x,t\right)$ condition has $\Phi\left(2\right)$-norm uniformly small in time, then
the energy inequality, which prevents backscattering, is not needed in the proof of Theorem \ref{uniqueness}),
 this will be a strong solution of (\ref{Navierstokes}).

\end{document}